\documentclass[11pt]{article}
\usepackage{amssymb,amsmath,amsfonts,mathrsfs,amsthm,epsfig,latexsym,color}
\usepackage{enumitem,geometry}
\usepackage{fourier}
\usepackage[all]{xy}

\geometry{
 top=2.5cm,left=3cm,right=2.5cm,bottom=2.5cm
}

\makeatletter
\newcommand{\subsectionruninhead}{\@startsection{subsection}{2}{0mm}
{-\baselineskip}{-0mm}{\bf\large}}
\newcommand{\subsubsectionruninhead}{\@startsection{subsubsection}{3}{0mm}
{-\baselineskip}{-0mm}{\bf\normalsize}}
\makeatother

\newtheorem*{theorem*}{Theorem}
\newtheorem*{proof*}{Proof}
\newtheorem*{proposition*}{Proposition}
\newtheorem*{notation*}{Notation}
\newtheorem*{corollary*}{Corollary}
\newtheorem*{claim*}{Claim}
\newtheorem*{remark*}{Remark}

\newtheorem{conjecture}{Conjecture}
\newtheorem{theorem}{Theorem}[section]
\newtheorem{proposition}{Proposition}[section]
\newtheorem{corollary}[proposition]{Corollary}
\newtheorem{lemma}[proposition]{Lemma}
\newtheorem{claim}[proposition]{Claim}

\theoremstyle{definition}

\theoremstyle{remark}
\newtheorem{remark}[proposition]{Remark}

\numberwithin{equation}{section}

\setcounter{tocdepth}{3}
\def\e{{\varepsilon}}

\begin{document}
\title{Rigidity of center Lyapunov exponents and $su$-integrability}
\author{Shaobo Gan  ~ and ~ Yi Shi}
\date{}
\maketitle
\begin{abstract}
Let $f$ be a conservative partially hyperbolic diffeomorphism, which is homotopic to an Anosov automorphism $A$ on $\mathbb{T}^3$. We show that the stable and unstable bundles of $f$ are jointly integrable if and only if every periodic point of $f$ admits the same center Lyapunov exponent with $A$. In particular, $f$ is Anosov. Thus every conservative partially hyperbolic diffeomorphism, which is homotopic to an Anosov automorphism on $\mathbb{T}^3$, is ergodic. This proves the Ergodic Conjecture proposed by Hertz-Hertz-Ures on $\mathbb{T}^3$.
\end{abstract}

\section{Introduction}\label{sec:introduction}

A diffeomorphism $f$ on a closed Riemannian manifold $M$ is partially hyperbolic if there exists a continuous $Df$-invariant splitting $TM=E^s\oplus E^c\oplus E^u$ and continuous functions $\sigma,\mu:M\rightarrow\mathbb{R}$, such that $0<\sigma<1<\mu$ and
$$
\|Df(v^s)\|<\sigma(p)<\|Df(v^c)\|<\mu(p)<\|Df(v^u)\|
$$
for every $p\in M$ and unit vector $v^*\in E^*(p)$, for $*=s,c,u$.

Since Pugh and Shub \cite{PS} conjectured that stably ergodic diffeomorphisms are open and dense in the space of $C^2$ conservative partially hyperbolic diffeomorphisms, ergodicity of partially hyperbolic diffeomorphisms has been one of the main topics of research in differentiable dynamics.
A key ingredient of proving ergodicity for partially hyperbolic diffeomorphisms is a property called accessibility. In dimension 3, for instance, it has been showed \cite{BW,HHU2} that every conservative accessible partially hyperbolic diffeomorphism is ergodic. Moreover, accessibility \cite{HHU2} is an open dense property for partially hyperbolic diffeomorphisms with one-dimensional center bundle. It seems promising that we can classify 3 dimensional non-ergodic partially hyperbolic diffeomorphisms. Actually, Hertz-Hertz-Ures proposed the following Ergodic Conjecture \cite{HHU1,CHHU}:

\begin{conjecture}\label{conj:ergodic-conjecture}
	If a conservative partially hyperbolic diffeomorphism of a 3-manifold is non-ergodic, then there is a 2-torus tangential to $E^s\oplus E^u$. In particular, the only orientable 3-manifolds that admit a non-ergodic conservative partially hyperbolic diffeomorphism are: 
	\begin{enumerate}
		\item the 3-torus $\mathbb{T}^3$;
		\item the mapping torus of ${\rm -Id}$; or
		\item the mapping torus of a hyperbolic automorphism of the 2-torus.
	\end{enumerate}
\end{conjecture}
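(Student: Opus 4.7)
The plan is to establish the conjecture through three interlocking steps: (A) reduce non-ergodicity to non-accessibility; (B) extract from non-accessibility a periodic $2$-torus tangential to $E^s\oplus E^u$; and (C) classify the closed orientable $3$-manifolds that can carry such a torus as one of the three listed types.

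For (A), I would invoke the Burns-Wilkinson theorem: a $C^2$ conservative partially hyperbolic diffeomorphism which is accessible and center-bunched is ergodic. Since the setting has $\dim E^c = 1$, center-bunching is automatic, so a non-ergodic $f$ must fail to be accessible. For (B), I would invoke the Hertz-Hertz-Ures structure theorem on accessibility classes in dimension $3$, which says that the set of non-open accessibility classes forms a closed, $f$-invariant $2$-dimensional lamination $\Lambda$ whose leaves are tangential to $E^s\oplus E^u$ and are saturated by stable and unstable leaves. The key substep is to show that in the non-ergodic case $\Lambda$ contains a compact leaf, and that such a leaf must be a $2$-torus: compactness follows from applying Poincar\'e recurrence along the transverse center direction to an ergodic component of volume supported in $\Lambda$, while the torus conclusion follows because any compact leaf inherits two transverse one-dimensional foliations along which some iterate of $f$ acts as an Anosov homeomorphism, and the only closed surface supporting an Anosov homeomorphism is $\mathbb{T}^2$. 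The main theorem of the present paper plays exactly the role of (B) in the specific case $M=\mathbb{T}^3$ with $f$ homotopic to an Anosov automorphism $A$: the equivalence between $su$-integrability and periodic-orbit rigidity of center Lyapunov exponents forces either $f$ to be Anosov (hence ergodic) or the existence of the desired invariant $2$-torus through the $su$-foliation.

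For (C), let $T\subset M^3$ be an $f$-periodic $2$-torus tangential to $E^s\oplus E^u$. Since $E^c$ is a transverse line field, saturating $T$ by center leaves (after possibly passing to a finite cover) exhibits $M^3$ as a $T^2$-bundle over $S^1$, and compatibility with the partial hyperbolicity forces the monodromy to preserve the stable and unstable foliations on the fiber; such a monodromy is, up to isotopy, either the identity, $-\mathrm{Id}$, or a hyperbolic toral automorphism, producing respectively $\mathbb{T}^3$, the mapping torus of $-\mathrm{Id}$, and the mapping torus of a hyperbolic automorphism of $\mathbb{T}^2$.

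The main obstacle is step (B): producing the compact torus leaf from the non-ergodic, non-accessible data. Ruling out non-compact leaves and higher-genus compact leaves simultaneously requires combining recurrence arguments, the coexistence of contracting and expanding one-dimensional foliations on each compact leaf (which constrains the Euler characteristic), and fine control of the holonomy of the stable and unstable foliations transverse to $\Lambda$. This is precisely where the present paper's rigidity technique---comparing center Lyapunov exponents of $f$ at periodic points with those of the linear model $A$ to force $su$-integrability---supplies the missing ingredient in the $\mathbb{T}^3$ case; extending such rigidity ideas, or finding substitutes, on the remaining two manifold types is the principal technical challenge in turning this strategy into a complete proof of the full conjecture.
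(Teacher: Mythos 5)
The statement you are trying to prove is stated in the paper as a \emph{conjecture}: the paper never proves it in full generality, and its actual contribution (Theorem \ref{thm:main} and Corollary \ref{coro:ergodic}, combined with Hammerlindl--Ures \cite{HU} and Hammerlindl \cite{H2}) settles it only on $\mathbb{T}^3$. So there is no ``paper proof'' to match; the question is whether your sketch closes the general case, and it does not. The decisive gap is your step (B). From non-ergodicity you correctly get non-accessibility (Burns--Wilkinson, or \cite{HHU2} for one-dimensional center), and from non-accessibility the Hertz--Hertz--Ures structure theorem gives a nonempty closed invariant lamination of non-open accessibility classes tangent to $E^s\oplus E^u$. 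But passing from that lamination to a \emph{compact} leaf is precisely the open content of the conjecture: the leaves could a priori all be planes or cylinders (possibly densely embedded), and Poincar\'e recurrence ``along the transverse center direction'' does not produce compactness of any leaf --- recurrence of points in an ergodic component supported on the lamination says nothing about the topology of individual leaves. Your own closing paragraph concedes that this step is ``the principal technical challenge,'' which is an admission that the argument is a program, not a proof. (Two smaller inaccuracies: in the $\mathbb{T}^3$ case homotopic to Anosov there is \emph{never} a torus tangent to $E^s\oplus E^u$ by \cite{HU}, so the paper's logic is not the dichotomy you describe --- it shows non-ergodicity forces $f$ to be conjugate to $A$, then rigidity of center exponents forces $f$ Anosov, and conservative Anosov is ergodic, a contradiction; and a compact leaf tangent to $E^s\oplus E^u$ is ruled out of higher genus by Euler characteristic, not by carrying an ``Anosov homeomorphism,'' since the leaf need not be invariant.)

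Your step (C) is essentially a known theorem of Hertz--Hertz--Ures (existence of a $2$-torus tangent to $E^s\oplus E^u$ forces $M$ to be one of the three listed mapping tori), so quoting it is legitimate, though your sketch of it (center-saturation giving a $T^2$-bundle with monodromy $\mathrm{Id}$, $-\mathrm{Id}$, or hyperbolic) glosses over why parabolic monodromy and non-fibered situations are excluded. In summary: steps (A) and (C) rest on available results, but step (B) for a general $3$-manifold is missing, and without it you have reproduced the statement of the conjecture rather than a proof; what can honestly be claimed with the paper's rigidity technique is only the $\mathbb{T}^3$ case.
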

 
The simplest 3-manifold supporting partially hyperbolic diffeomorphisms is 3-torus $\mathbb{T}^3$.
It has been proven in \cite{BI,Po2} that if $f:\mathbb{T}^3\rightarrow\mathbb{T}^3$ is partially hyperbolic, then the action $f_*:\pi_1(\mathbb{T}^3)=\mathbb{Z}^3\rightarrow\mathbb{Z}^3$ is also partially hyperbolic. This means $f_*\in{\rm GL}(3,\mathbb{Z})$ has three real eigenvalues with different modulos. One eigenvalue has modulo larger than $1$, and one has modulo smaller than one. So there are two classes of partially hyperbolic diffeomorphisms on $\mathbb{T}^3$:
\begin{itemize}
	\item either $f_*\in{\rm GL}(3,\mathbb{Z})$ has an eigenvalue equal to -1 or 1;
	\item or $f_*\in{\rm GL}(3,\mathbb{Z})$ is Anosov, i.e. every eigenvalue of $f_*$ has modulo not equal to 1.
\end{itemize}

In the first case, there are partially hyperbolic diffeomorphisms which are non-ergodic. For instance, an Anosov automorphism on 2-torus $\mathbb{T}^2$ times identity map on $\mathbb{S}^1$ is not ergodic. Moreover, it has been shown \cite{H2} that if such $f$ is not ergodic, then it admits 2-torus tangent to $E^s\oplus E^u$.

For the second case, it has been shown \cite{HU} that there is no 2-torus tangent to $E^s\oplus E^u$. Thus if we want to prove the Ergodic Conjecture on $\mathbb{T}^3$, we need to show that
every $C^2$ conservative partially hyperbolic diffeomorphism, homotopic to an Anosov automorphism on $\mathbb{T}^3$, is ergodic. See also \cite[Conjecture 1.11]{HU}.

In order to prove ergodicity for partially hyperbolic diffeomorphisms on 3-manifolds, the only obstruction is non-accessibility.
If $f$ is conservative, partially hyperbolic, and homotopic to an Anosov automorphism on $\mathbb{T}^3$, then $f$ is non-accessible implies that the stable and unstable bundles of $f$ are jointly integrable \cite{HU}. This is equivalent to $f$ admits a 2-dimensional invariant foliation tangent to the union of stable and unstable bundles everywhere. We say that such an $f$ is $su$-integrable. 

Hammerlindl and Ures proved the following theorem.

\begin{theorem*}[\cite{HU}]
Let $f$ be a $C^{1+\alpha}$ conservative partially hyperbolic diffeomorphism, which is homotopic to an Anosov automorphism $A$ on $\mathbb{T}^3$. If $f$ is not ergodic, it is topologically conjugate to $A$.
\end{theorem*}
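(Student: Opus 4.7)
The plan is to combine the accessibility dichotomy with the Franks--Manning semiconjugacy, and to use the extra $su$-foliation obtained from non-accessibility to upgrade the semiconjugacy to a topological conjugacy.

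First, since $f$ is conservative, $C^{1+\alpha}$ and has one-dimensional center bundle, the theorem of Burns--Wilkinson and Hertz--Hertz--Ures recalled in the introduction says that accessibility forces ergodicity. Hence non-ergodicity of $f$ forces $f$ to be non-accessible. Since $f$ is homotopic to the Anosov $A\in\mathrm{GL}(3,\mathbb{Z})$, non-accessibility is equivalent to $su$-integrability, so from now on we may assume there is an $f$-invariant continuous two-dimensional foliation $\mathcal{F}^{su}$ tangent to $E^s\oplus E^u$.

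Next I would appeal to the structure theory for partially hyperbolic diffeomorphisms on $\mathbb{T}^3$ homotopic to Anosov automorphisms (Brin--Burago--Ivanov, Potrie): $f$ is dynamically coherent with a one-dimensional center foliation $\mathcal{F}^c$, and there is a continuous Franks--Manning semiconjugacy $h:\mathbb{T}^3\to\mathbb{T}^3$ satisfying $h\circ f=A\circ h$ that lifts to a map at bounded distance from the identity on $\mathbb{R}^3$. Using the quasi-isometry of the strong foliations on the universal cover together with the bounded displacement of $h$, $h$ is injective on each strong stable and each strong unstable leaf of $f$; combined with the product structure on every $\mathcal{F}^{su}$-leaf, this yields that $h$ restricts to a bijection from every $\mathcal{F}^{su}$-leaf of $f$ onto the corresponding affine $su$-plane of $A$. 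In particular $h$ matches $\mathcal{F}^{su}$ with the $su$-foliation of $A$, maps $\mathcal{F}^c$-leaves into the weak leaves of $A$, and is already a homeomorphism transverse to $\mathcal{F}^c$.

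The remaining and hardest step is to promote $h$ to a global homeomorphism. The only possible obstruction is that $h$ could collapse a nondegenerate arc $J$ inside some center leaf of $f$ to a single point of $A$, and ruling this out is the main technical obstacle; this is precisely where conservativity is needed. I would argue by contradiction: saturating such an arc $J$ by $\mathcal{F}^{su}$ produces an open set $U\subset\mathbb{T}^3$ of positive Lebesgue measure on which $h$ collapses an entire center arc in every $\mathcal{F}^{su}$-leaf. Comparing the $f$-orbit of $U$ with the hyperbolic $A$-action on its weak direction, and using that $h$ has bounded displacement on the universal cover while $f$ preserves Lebesgue measure, one derives the contradiction by controlling how the collapsed region evolves under $f^n$. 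Once $h$ is injective it is automatically a homeomorphism by compactness, and the intertwining $h\circ f=A\circ h$ becomes the desired topological conjugacy between $f$ and $A$.
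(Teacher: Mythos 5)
First, a structural remark: this theorem is attributed to Hammerlindl and Ures and is quoted in the paper as a citation to \cite{HU}; the paper states it, records its consequences (see Lemma~\ref{lem:semi-conjugacy}), but does not prove it. So there is no in-paper proof against which to compare yours, and the evaluation below is of your argument on its own terms.

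Your reduction is correct and follows the standard route. Non-ergodicity forces non-accessibility by the Burns--Wilkinson and Hertz--Hertz--Ures theory; non-accessibility, with one-dimensional center, orientability, and homotopy class Anosov, gives joint $su$-integrability; the Franks semiconjugacy $H$ exists with bounded displacement and, by Potrie's work, sends $\tilde{\cal F}^{s}_f$, $\tilde{\cal F}^{u}_f$ leaves homeomorphically onto the corresponding linear leaves, so on each $\mathcal{F}^{su}_f$-leaf it is a bijection onto an affine $su$-plane. It is also true, as you implicitly use, that a nondegenerate fiber $h^{-1}(c)=[a,b]$ propagates through the $su$-holonomy: for $z$ in an $su$-plaque near an interior point of $[a,b]$, the fiber $h^{-1}(h(z))$ contains distinct points in $\mathcal{F}^{su}_f(a)$ and $\mathcal{F}^{su}_f(b)$, so the collapsed set has nonempty interior, is $su$-saturated, and is $f$-invariant. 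Up to here you are exactly where the literature is.

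The gap is the final contradiction, which is the real content of the theorem and which you only gesture at. You say one compares the $f$-orbit of $U$ with the $A$-action on the center direction, uses bounded displacement of $H$, and the preservation of Lebesgue measure, ``controlling how the collapsed region evolves under $f^n$.'' None of these, as stated, produces a contradiction: the $f$-orbit of $U$ is just $U$ (it is invariant), so there is no spreading to exploit; bounded displacement of $H$ holds unconditionally for any homotopic map and carries no information about the fibers; and the fiber lengths $|h^{-1}(h(x))|$ remain uniformly bounded by $2aL+b$ (item~5 of Lemma~\ref{lem:semi-conjugacy}) under all iterates, so there is no blow-up either. Poincar\'e recurrence on $U$ also fails to give a contradiction because the fiber-length function is only upper semi-continuous and need not be bounded below on $U$. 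In other words, you have correctly reduced the theorem to proving that a positive-measure, $f$-invariant, $su$-saturated collapsing set cannot exist, but you have not proved it. The actual Hammerlindl--Ures argument leans on the minimality of $\mathcal{F}^{su}_f$ (established using conservativity and the nonexistence of a compact $su$-torus for maps homotopic to Anosov), not on a direct measure comparison of the $f$-orbit of $U$ with the $A$-dynamics; as written your proposal reproduces the setup but leaves the hard step unproved.
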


Here $f$ is not ergodic is equivalent to $f$ is $su$-integrable and the integral $su$-foliation is minimal on $\mathbb{T}^3$. Moreover, Hammerlindl and Ures proved that the topological conjugacy preserves all invariant foliations of $f$, see Lemma \ref{lem:semi-conjugacy}.

In this paper, we give a necessary and sufficient condition for $su$-integrability of this kind of diffeomorphisms. Moreover, such kind of $f$ is Anosov by applying Lemma \ref{lem:Anosov}.

\begin{theorem}\label{thm:main}
	Let $f$ be a $C^{1+\alpha}$ conservative partially hyperbolic diffeomorphism, which is homotopic to an Anosov automorphism $A$ on $\mathbb{T}^3$. The stable and unstable bundles of $f$ are jointly integrable, if and only if, every periodic point of $f$ admits the same center Lyapunov exponent as $A$.
    In particular, $f$ is Anosov.
\end{theorem}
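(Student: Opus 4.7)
My plan is to leverage the semi-conjugacy $h:\mathbb{T}^3 \to \mathbb{T}^3$, homotopic to the identity, satisfying $h \circ f = A \circ h$, which exists since $f$ is homotopic to the Anosov automorphism $A$. The ``in particular, $f$ is Anosov'' clause follows directly from Lemma \ref{lem:Anosov} once the main equivalence is established: a uniform nonzero center Lyapunov exponent at every periodic point rules out any invariant measure whose center exponent has the opposite sign, which is the input needed to upgrade $E^c$ to a uniformly hyperbolic direction.

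For the direction ($\Rightarrow$), assume $f$ is $su$-integrable. By Lemma \ref{lem:semi-conjugacy}, $h$ is then a topological conjugacy preserving every invariant foliation of $f$ and $A$. For any $f$-periodic point $p$ of period $n$, the image $h(p)$ is $A$-periodic of period dividing $n$, and $h$ conjugates $f^n|_{W^c_f(p)}$ to $A^n|_{W^c_A(h(p))}$, which is multiplication by $\lambda_c(A)^n$ on $\mathbb{R}$. Since mere topological conjugacy on a one-dimensional interval does not preserve the derivative at a hyperbolic fixed point, the crucial step is to upgrade this. I would do so by exploiting volume preservation of $f$ together with the absolute continuity of the stable and unstable holonomies: Lebesgue disintegrates along $su$-leaves and $f$'s transverse Jacobian equals $\|Df|_{E^c}\|$. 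Transporting this transverse structure via $h$ to the rigid linear transverse structure of $A$, and comparing the resulting cocycles over periodic orbits, should pin down $\|Df^n|_{E^c(p)}\| = |\lambda_c(A)|^n$ at every periodic $p$.

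For the direction ($\Leftarrow$), assume every $f$-periodic point has center Lyapunov exponent $\log|\lambda_c(A)|$. I argue by contradiction: if $f$ were not $su$-integrable, then by \cite{HU} $f$ is accessible, and hence, by \cite{BW,HHU2}, ergodic. The H\"older function $\psi := \log \|Df|_{E^c}\| - \log|\lambda_c(A)|$ has vanishing Birkhoff sums over every $f$-periodic orbit. By a Livshits-type theorem for accessible conservative partially hyperbolic systems (in the spirit of Kalinin or Wilkinson), $\psi$ is a continuous coboundary, so the ergodic center Lyapunov exponent of $f$ equals $\log|\lambda_c(A)|$ almost everywhere. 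Combined with the entropy identity $h_\mathrm{top}(f) = h_\mathrm{top}(A)$ (coming from the semi-conjugacy and the variational principle), the Pesin entropy formula, and rigidity results for partially hyperbolic systems on $\mathbb{T}^3$ (in the spirit of Gogolev or Saghin--Yang), this forces $f$ to be smoothly conjugate to $A$, in particular $su$-integrable, a contradiction.

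The main obstacle is the forward direction: extracting pointwise derivative equality at every periodic point from the merely topological data furnished by the conjugacy $h$. Topological conjugacy on a one-dimensional interval does not by itself preserve the hyperbolic eigenvalue at a fixed point, so bridging this gap requires carefully exploiting the interplay between volume preservation, the absolute continuity of stable and unstable holonomies, and the foliated structure produced by $su$-integrability. The converse direction, while technical, is expected to be cleaner once the Livshits machinery for accessible partially hyperbolic systems is brought to bear.
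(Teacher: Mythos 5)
Your proposal diverges from the paper's proof in both directions, and each direction contains a genuine gap.

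For the direction ``$su$-integrable $\Rightarrow$ equal center exponents'' you propose to exploit volume preservation, absolute continuity of holonomies, and the disintegration of Lebesgue along $su$-leaves, hoping to transport a transverse Jacobian structure through $h$. You yourself flag this as the ``main obstacle,'' and it does not close: a measure-theoretic Jacobian controls integral or almost-everywhere information but does not pin down the derivative of $f^n$ at an individual periodic point, so it is not clear how to extract $\|Df^n|_{E^c(p)}\|=|\lambda_c(A)|^n$ from it. The paper instead assumes two periodic points $p,q$ have different center exponents, takes a short center arc $J_0$ at $p$, slides it by the $su$-holonomy to an arc $J_1$ near $q$ (this slide is available exactly because the $su$-foliation exists and, by Lemma~\ref{lem:semi-conjugacy}, maps to the linear $su$-foliation of $A$), and shows by careful estimates in backward time that the ratio $|f^{-n}(J_1)|/|f^{-n}(J_0)|$ goes to zero while $f^{-n}(x)$ returns to a bounded neighborhood of $p$. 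This contradicts the $C^1$-smoothness of unstable holonomy on center-unstable leaves from Theorem~\ref{thm:PSW}. That mechanism (due to Gogolev~\cite{G}) has no analogue in your sketch.

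For the direction ``equal center exponents $\Rightarrow$ $su$-integrable'' your chain has two unjustified links. First, the Livshits theorem for accessible partially hyperbolic systems (Katok--Kononenko, Wilkinson) requires vanishing of \emph{periodic cycle functionals} built from $su$-paths, not merely vanishing Birkhoff sums over periodic orbits; the latter is the classical Anosov hypothesis and applies only once you know $f$ is Anosov, which you have not established. Second, and more seriously, ``center exponent of Lebesgue equals $\log|\lambda_c(A)|$ plus matching entropy implies $f$ is smoothly conjugate to $A$'' is not a known theorem and is in fact false in general: the paper's final Remark constructs $f$ whose periodic center exponents all equal $\lambda_c(A)$ yet the conjugacy is \emph{not} smooth (only differentiable along ${\cal F}^c_f$). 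So the smooth-conjugacy endpoint of your contradiction cannot be reached. The paper's route is: show $\lambda_c(p)\ge\lambda_c(A)$ at all periodic orbits (Lemma~\ref{lem:growth-rate}), deduce $f$ is Anosov (Lemma~\ref{lem:Anosov}, via Theorem~\ref{thm:BGY}), apply the classical Livshits theorem to build a conformal center metric $d^c$ (Lemma~\ref{lem:center-metric}), and then show that a four-legged $su$-twisting, if it existed, would accumulate linearly under $d^c$-invariant holonomies and force points on a single $\tilde{\cal F}^s_f$-leaf and a single $\tilde{\cal F}^{cu}_f$-leaf to diverge, contradicting the bounded distance $\|H-\mathrm{Id}\|<L$ from the semi-conjugacy. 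Your accessibility-and-ergodicity detour is replaced by this purely geometric bounded-deviation argument.

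Finally, your reading of Lemma~\ref{lem:Anosov} slightly misstates its content: the lemma hypothesizes only that all periodic center exponents coincide with one another (not that they equal $\lambda_c(A)$), and the proof needs Lemma~\ref{lem:growth-rate} to show the common value exceeds $1$ and Theorem~\ref{thm:BGY} to promote this to uniform expansion on the whole homoclinic class; it is not simply that a nonzero periodic exponent ``rules out any invariant measure with opposite-sign exponent.''
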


\begin{remark}
	In Theorem \ref{thm:main}, the condition that $f$ is conservative can be replaced by assuming the non-wandering set $\Omega(f)=\mathbb{T}^3$. Both properties imply that the $su$-foliation of $f$ is minimal and the conjugacy preserves the $su$-foliation.
\end{remark}

Combined with the work of Hammerlindl and Ures, we have the following corollary.

\begin{corollary}\label{coro:ergodic}
	Every $C^{1+\alpha}$ conservative partially hyperbolic diffeomorphism, which is homotopic to an Anosov automorphism on $\mathbb{T}^3$, is ergodic.
\end{corollary}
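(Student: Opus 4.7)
\begin{proof*}
The plan is to argue by contradiction, chaining together the Hammerlindl--Ures theorem quoted above, the new Theorem \ref{thm:main}, and the classical Anosov--Hopf ergodicity theorem.

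Suppose for contradiction that $f$ is a $C^{1+\alpha}$ conservative partially hyperbolic diffeomorphism homotopic to an Anosov automorphism $A$ on $\mathbb{T}^3$, but $f$ is not ergodic. The first step is to extract $su$-integrability. Since every conservative accessible partially hyperbolic diffeomorphism of a $3$-manifold is ergodic (Burns--Wilkinson, Hertz--Hertz--Ures), a non-ergodic $f$ must be non-accessible; by the Hammerlindl--Ures result cited in the introduction, non-accessibility in this homotopy class forces joint integrability of $E^s$ and $E^u$. Equivalently, as noted just after the quoted Hammerlindl--Ures theorem, non-ergodicity of $f$ coincides with $f$ being $su$-integrable with minimal $su$-foliation.

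Now apply Theorem \ref{thm:main}: its ``only if'' direction together with the ``In particular'' clause shows that any such $su$-integrable $f$ is itself Anosov. At this point the corollary reduces to a classical fact: a $C^{1+\alpha}$ volume-preserving Anosov diffeomorphism of a closed manifold is ergodic, via absolute continuity of the stable and unstable foliations and the Hopf argument. Applied to $f$, this contradicts the standing non-ergodicity assumption, so $f$ must have been ergodic to begin with.

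The main effort in this whole implication has already been absorbed into Theorem \ref{thm:main}; the corollary itself is essentially a bookkeeping step that combines the accessible-versus-$su$-integrable dichotomy with the new rigidity statement and with the classical Anosov--Hopf ergodicity theorem. The only subtle point worth flagging is that the classical Anosov ergodicity theorem requires $C^{1+\alpha}$ regularity, which is precisely why that hypothesis is kept throughout; no further delicate estimates are needed.
\end{proof*}
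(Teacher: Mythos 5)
Your proof is correct and follows exactly the route the paper intends (the corollary is stated without a written proof, as an immediate combination of the Hammerlindl--Ures non-accessibility/$su$-integrability dichotomy, Theorem \ref{thm:main} yielding that $f$ is Anosov, and the classical Hopf-argument ergodicity of conservative $C^{1+\alpha}$ Anosov diffeomorphisms). No gaps; nothing further is needed.
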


From the previous work of Ren, Gan and Zhang \cite{RGZ}, if $f$ is a $C^{1+\alpha}$ partially hyperbolic and Anosov diffeomorphism on $\mathbb{T}^3$, then there exist a series of equivalent conditions to $su$-integrability of $f$. We state them in Theorem \ref{thm:equivalent}.

\vskip 0.5 \baselineskip

\noindent {\bf Organization of this paper:} In Section \ref{sec:conjugacy}, we recall some properties of partially hyperbolic diffeomorphisms homotopic to an Anosov automorphism on $\mathbb{T}^3$. 
In Section \ref{sec:integrability}, we prove the ``sufficient'' part of Theorem \ref{thm:main}, which states the fact that all periodic points have the same center Lyapunov exponent implies $f$ is $su$-integrable.
In Section \ref{sec:exponents}, we show that if such kind of $f$ is $su$-integrable, then every periodic point of $f$ admits the same center Lyapunov exponent with $A$. This proves the ``necessary'' part of Theorem \ref{thm:main}. 
Finally, in Section \ref{sec:equivalent}, we give a series of equivalent conditions for $su$-integrability when $f$ is partially hyperbolic and Anosov on $\mathbb{T}^3$.

\vskip 0.5 \baselineskip

\noindent {\bf Acknowledgements:} We would like to acknowledge our debt to A. Gogolev  for a lot of help during preparing this paper, especially for pointing out that his work \cite{G} is useful for showing the rigidity of center Lyapunov exponents. We are grateful to A. Hammerlindl, F. Rodriguez Hertz, J. Rodriguez Hertz, A. Tahzibi, R. Ures, and J. Yang for their valuable comments.  S. Gan is supported by NSFC 11771025 and 11831001.  Y. Shi is supported by NSFC 11701015, 11831001 and Young Elite Scientists Sponsorship Program by CAST.

\section{Conjugacy and $su$-integrability}\label{sec:conjugacy}

Let $f$ be a partially hyperbolic diffeomorphism which is homotopic to an Anosov automorphism $A$ on $\mathbb{T}^3$. Then $A$ is also partially hyperbolic \cite{BI,Po2} $T\mathbb{T}^3=E^s_A\oplus E^c_A\oplus E^u_A$. These three invariant bundles are linear and corresponding to the three eigenvalues $\lambda_s,\lambda_c,\lambda_u$ of $A$ respectively. From now on, we assume that the center bundle of $A$ is expanding, i.e.
$$
|\lambda_s|<1<|\lambda_c|<|\lambda_u|.
$$
Denote by ${\cal F}^s_A,{\cal F}^c_A,{\cal F}^u_A$ the invariant foliations tangent to $E^s_A,E^c_A,E^u_A$ respectively. Since $A$ is linear, all bundles
$$
E^{cs}_A=E^s_A\oplus E^c_A, \qquad E^{cu}_A=E^c_A\oplus E^u_A, \qquad {\rm and} \qquad E^{su}_A=E^s_A\oplus E^u_A
$$
are integrable. Denote by ${\cal F}^{cs}_A,{\cal F}^{cu}_A,{\cal F}^{su}_A$ the foliations tangent to them respectively.

Since $f$ is partially hyperbolic, then $f$ has stable and unstable foliations ${\cal F}^s_f$ and ${\cal F}^u_f$ tangent to $E^s_f$ and $E^u_f$ respectively. It has been proved by R. Potrie that $f$ is dynamically coherent, i.e. there exist $f$-invariant foliations ${\cal F}^{cs}_f$ and ${\cal F}^{cu}_f$ tangent to $E^{cs}_f$ and $E^{cu}_f$ respectively. Moreover, ${\cal F}^{cs}_f$ intersects ${\cal F}^{cu}_f$ in an one-dimensional $f$-invariant foliation ${\cal F}^c_f$, which is tangent to $E^c_f$ everywhere. We denote by $d_{{\cal F}^*_f}(\cdot,\cdot)$ and $d_{{\cal F}^*_A}(\cdot,\cdot)$ be the distance induced by the inherited Riemannian metric on leaves of ${\cal F}^*_f$ and ${\cal F}^*_A$ , respectively, for $*=s,c,u,cs,cu$.

We denote by $\tilde{\cal F}^*_f$ and $\tilde{\cal F}^*_A$ the the lifting foliations of ${\cal F}^*_f$ and ${\cal F}^*_A$ in $\mathbb{R}^3$ for $*=s,c,u,cs,cu$. We denote by $d_{\tilde{\cal F}^*_f}(\cdot,\cdot)$ and $d_{\tilde{\cal F}^*_A}(\cdot,\cdot)$ be the distance induced by the inherited Riemannian metric on leaves of $\tilde{\cal F}^*_f$ and $\tilde{\cal F}^*_A$ , respectively, for $*=s,c,u,cs,cu$.

The following lemma was proved in \cite{Po2,HP}. See also \cite{BBI,H1} when $f$ is absolutely partially hyperbolic.

\begin{lemma}[\cite{Po2,HP}]\label{lem:quasi-isometric}
The two foliations $\tilde{\cal F}^s_f$ and $\tilde{\cal F}^{cu}_f$ have global product structure: $\tilde{\cal F}^s_f(x)$ intersects $\tilde{\cal F}^{cu}_f(y)$ in exactly one point, for every $x,y\in\mathbb{R}^3$. The two foliations $\tilde{\cal F}^u_f$ and $\tilde{\cal F}^{cs}_f$ have also global product structure.

The lifting foliation $\tilde{\cal F}^*_f$, $*=s,c,u$ is quasi-isometric in $\mathbb{R}^3$: there exist constants $a,b>0$, such that for any $y\in\tilde{\cal F}^*_f(x)$ with $*=s,c,u$,
$$
d_{\tilde{\cal F}^*_f}(x,y)\leq a\cdot|x-y|+b.
$$
\end{lemma}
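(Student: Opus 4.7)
The plan is to compare the lifted $f$-foliations with the trivially quasi-isometric affine foliations of $A$, showing that each $\tilde{\cal F}^*_f$-leaf stays in a uniform tubular neighborhood of the corresponding $\tilde{\cal F}^*_A$-leaf, and then transferring both the quasi-isometry and the global product structure from the linear model. Lifting $f$ to $\tilde f:\mathbb{R}^3\to\mathbb{R}^3$, I would write $\tilde f=A+\varphi$ with $\varphi$ a $\mathbb{Z}^3$-periodic (hence bounded) map; the affine leaves $\tilde{\cal F}^*_A(x)=x+E^*_A$ are isometric and the pairs $(\tilde{\cal F}^s_A,\tilde{\cal F}^{cu}_A)$ and $(\tilde{\cal F}^u_A,\tilde{\cal F}^{cs}_A)$ obviously have global product structure, so the whole problem reduces to the proximity estimate.

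The key proximity step is the Brin--Burago--Ivanov telescoping. Induction on $\tilde f=A+\varphi$ gives
$$
\tilde f^n(y)-\tilde f^n(x)=A^n(y-x)+\sum_{k=0}^{n-1}A^{n-1-k}\bigl[\varphi(\tilde f^k(y))-\varphi(\tilde f^k(x))\bigr].
$$
For $y\in\tilde{\cal F}^s_f(x)$ the left-hand side stays bounded (indeed tends to $0$) by uniform stable contraction, the bracketed terms decay exponentially in $k$ for the same reason, and $A^{n-1-k}$ expands $E^c_A\oplus E^u_A$ exponentially in $n-1-k$. Partial hyperbolicity of the induced $f_*\in\mathrm{GL}(3,\mathbb{Z})$ makes these two rates compatible, so projecting the identity onto $E^c_A\oplus E^u_A$ forces the $cu$-component of $y-x$ to be uniformly bounded. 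A symmetric backward iteration handles $\tilde{\cal F}^u_f$, and combining both spectral gaps $|\lambda_s|<|\lambda_c|<|\lambda_u|$ confines each center leaf to a uniform tube around the affine line $x+E^c_A$.

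Once each $\tilde{\cal F}^*_f(x)$ lies in a tube of bounded radius around $x+E^*_A$, the $E^*_A$-coordinate is proper along the leaf (a stable leaf cannot loop back without contradicting its forward contraction, and similarly for $u$ and $c$), so leaf length grows linearly with ambient distance and one reads off the constants $a,b$ with $d_{\tilde{\cal F}^*_f}(x,y)\leq a|x-y|+b$. For global product structure, the leaves of $\tilde{\cal F}^{cu}_f$ are properly embedded topological planes trapped in tubes around affine $cu$-planes, hence each separates $\mathbb{R}^3$ into two half-spaces; a leaf of $\tilde{\cal F}^s_f$ exits every compact set along the $E^s_A$ direction by the quasi-isometry just established, so it crosses every $cu$-leaf at least once, and local product structure together with connectedness forces the intersection to be a single point. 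The $\tilde{\cal F}^u_f$--$\tilde{\cal F}^{cs}_f$ case is symmetric. I expect the hardest step to be quasi-isometry of $\tilde{\cal F}^c_f$: the center direction carries no uniform contraction or expansion of its own, so ruling out loops requires using both spectral gaps carefully and exploiting the inclusion $\tilde{\cal F}^c_f\subset\tilde{\cal F}^{cs}_f\cap\tilde{\cal F}^{cu}_f$ to squeeze the center leaf into a one-dimensional tube.
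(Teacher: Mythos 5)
The paper does not prove this lemma: it is quoted from \cite{Po2,HP} (Potrie and Hammerlindl--Potrie), where the quasi-isometry and global product structure are established via Burago--Ivanov branching foliations and the Franks semi-conjugacy. So there is no internal proof to compare against; what follows is an assessment of your sketch on its own terms. The overall strategy --- telescope against the linear model $A$, trap each lifted leaf in a uniform tube around the corresponding affine leaf, and then read off quasi-isometry and GPS --- is the right one and is essentially what the cited sources do. But two steps are carried by hand-waving at points where the actual difficulty lives.

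First, the telescoping estimate: you say the bracketed terms $\varphi(\tilde f^k(y))-\varphi(\tilde f^k(x))$ ``decay exponentially in $k$'' and that the rates are ``compatible.'' If you use the Lipschitz/decay bound $\|\varphi(\tilde f^k(y))-\varphi(\tilde f^k(x))\|\lesssim\sigma^k\,d_{\tilde{\cal F}^s_f}(x,y)$, sum against $\|A^{n-1-k}|_{E^u_A}\|=|\lambda_u|^{n-1-k}$, and divide by $|\lambda_u|^n$, you get $|P^u(y-x)|\lesssim d_{\tilde{\cal F}^s_f}(x,y)$ --- a tube whose radius grows with the quantity you are trying to bound, hence useless. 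What actually gives the \emph{uniform} tube is the cruder bound $\|\varphi(\cdot)-\varphi(\cdot)\|\le 2\|\varphi\|_\infty$ (periodicity of $\varphi$), projected separately onto $E^u_A$ and $E^c_A$: this yields $|P^u(y-x)|\le \tfrac{2C\|\varphi\|_\infty}{|\lambda_u|-1}$ and $|P^c(y-x)|\le \tfrac{2C\|\varphi\|_\infty}{|\lambda_c|-1}$, uniform in $x,y$. You should also project onto $E^u_A$ and $E^c_A$ individually rather than onto $E^{cu}_A$ as a block, since $A^n|_{E^{cu}_A}$ has incomparable norm and conorm. Second, and more seriously, ``tube + cannot loop back $\Rightarrow$ quasi-isometry'' is not an argument: a properly embedded curve confined to a bounded tube around a line can still have arc length growing superlinearly in ambient distance (think of $t\mapsto(t,\sin t^2,0)$). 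What rescues the situation is a \emph{uniform} transversality of $E^s_f$ to $E^{cu}_A$, so that the $E^s_A$-coordinate along a stable leaf has derivative bounded away from zero with constant sign; that uniformity is itself a nontrivial step and is essentially why the cited proofs route through GPS (obtained by topological/branching-foliation arguments) before concluding quasi-isometry. Related, ``local product structure together with connectedness forces the intersection to be a single point'' is too quick --- local uniqueness plus transversal exit at $\pm\infty$ plus Jordan separation of $\mathbb{R}^3$ by the $cu$-plane is what pins down exactly one crossing. Finally, you correctly flag the center leaves as the hardest case, but the plan offered (``squeeze the center leaf into a one-dimensional tube'' via the inclusion in $\tilde{\cal F}^{cs}_f\cap\tilde{\cal F}^{cu}_f$) requires first establishing the tube constraints for $\tilde{\cal F}^{cs}_f$ and $\tilde{\cal F}^{cu}_f$, which do not follow from the stable/unstable telescoping alone because the center direction carries no uniform rate; this is where \cite{Po2,HP} rely on Burago--Ivanov's theory rather than on a direct telescope.
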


\begin{lemma}[\cite{F,Po2,U,HU}]\label{lem:semi-conjugacy}
	Let $f$ be a $C^{1+\alpha}$ partially hyperbolic diffeomorphism which is homotopic to an Anosov automorphism $A$ on $\mathbb{T}^3$. There exists a continuous surjective map $h:\mathbb{T}^3\rightarrow\mathbb{T}^3$ satisfying:
	\begin{enumerate}
		\item $h\circ f=A\circ h$, taking a lift $H$ of $h$ and a lift $F$ of $f$, then $H\circ F=A\circ H$.
		\item $h$ is homotopic to identity, and there exists $L>0$, such that $\|H-{\rm Id}\|<L$.
		\item For every $\tilde{x}\in\mathbb{R}^3$, $H:\tilde{\cal F}^s_f(\tilde{x})\rightarrow\tilde{\cal F}^s_A(H(\tilde{x}))$ is a homeomorphism.
		\item For every $\tilde{x}\in\mathbb{R}^3$, $H(\tilde{\cal F}^*_f(\tilde{x}))=\tilde{\cal F}^*_A(H(\tilde{x}))$ for $*=c,cs,cu$.
		\item For every $x\in\mathbb{T}^3$, $h^{-1}(h(x))$ is a compact center arc with length at most $2aL+b$.
	\end{enumerate}
    If $f$ is $su$-integrable and $h$ is a homeomorphism, i.e. $f$ is topologically conjugate to $A$ by $h$, then $h$ preserves all invariant foliations
    $$
    h({\cal F}^*_f)={\cal F}^*_A, \qquad \forall *=c,s,u,cs,cu,su.
    $$
\end{lemma}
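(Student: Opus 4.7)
The plan is to construct $H$ via Franks' classical lifting argument and then verify each foliation property by combining the hyperbolicity of $A$ with the quasi-isometry of the lifted foliations of $f$ from Lemma \ref{lem:quasi-isometric}.

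For items (1) and (2), I would lift $f$ to $F:\RR^3\to\RR^3$ and note that $u:=F-A$ descends to a continuous map on $\TT^3$, hence is bounded and $\ZZ^3$-equivariant. Then I would seek a bounded $v:\RR^3\to\RR^3$ solving $v\circ F-A\circ v=-u$, so that $H:=\mathrm{Id}+v$ satisfies $H\circ F=A\circ H$. Splitting $\RR^3$ along the $A$-invariant decomposition and using that $A$ has no eigenvalues of modulus $1$, a standard contraction-mapping argument in the Banach space of bounded continuous functions produces a unique such $v$, and uniqueness forces $v$ to be $\ZZ^3$-periodic. Thus $H$ descends to a continuous $h$ with $h\circ f=A\circ h$, homotopic to the identity, and $\|H-\mathrm{Id}\|<L$ with $L=\|v\|_\infty$.

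For items (3) and (4) I would exploit the semi-conjugacy: if $\tilde y\in\tilde{\cal F}^s_f(\tilde x)$, then $F^n(\tilde y)-F^n(\tilde x)\to 0$, so $A^n H(\tilde y)-A^n H(\tilde x)=HF^n(\tilde y)-HF^n(\tilde x)$ stays bounded; since $A$ expands on $E^{cu}_A$, this forces $H(\tilde y)\in\tilde{\cal F}^s_A(H(\tilde x))$. The crucial injectivity of $H$ on each stable leaf is proved by contradiction: if two distinct points on the same stable leaf were identified, iterating $F^{-n}$ would separate them unboundedly along the leaf, and Lemma \ref{lem:quasi-isometric} would convert this into ambient separation, contradicting $\|H-\mathrm{Id}\|<L$. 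Surjectivity onto $\tilde{\cal F}^s_A(H(\tilde x))$ follows from the image being open, closed, and non-empty inside the connected target leaf. The analogous statements for $*=cs,cu$ use ``bounded only in the transverse expanding direction'' in place of ``tending to zero'', and the center case follows by intersection.

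For item (5), the symmetric argument applied to $F^{-1}$ and $A^{-1}$ shows that $H$ is injective on unstable leaves as well. Combined with the global product structure from Lemma \ref{lem:quasi-isometric}, any fiber $H^{-1}(H(\tilde x))$ is forced to lie in $\tilde{\cal F}^{cs}_f(\tilde x)\cap\tilde{\cal F}^{cu}_f(\tilde x)=\tilde{\cal F}^c_f(\tilde x)$; connectedness of the fiber together with quasi-isometry on the center leaf bounds its length by $2aL+b$. When $h$ is a homeomorphism and $f$ is $su$-integrable, (3) and its unstable analogue give $h({\cal F}^s_f)={\cal F}^s_A$ and $h({\cal F}^u_f)={\cal F}^u_A$; the $h$-image of the 2-dimensional leaf ${\cal F}^{su}_f(x)$ is then a 2-dimensional topological submanifold saturated by both $A$-foliations, hence equals ${\cal F}^{su}_A(h(x))$ by unique integrability of $E^s_A\oplus E^u_A$. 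The main obstacle I expect is the injectivity of $H$ on stable and unstable leaves, which underpins both the leaf-homeomorphism claim in (3) and the center-arc description of fibers in (5); it relies crucially on converting intrinsic leaf separation into ambient $\RR^3$ separation via quasi-isometry and then reaching a contradiction with the boundedness of $H-\mathrm{Id}$, after which the rest of the lemma follows by iterated use of the same mechanism.
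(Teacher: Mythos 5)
The paper does not give an independent proof of this lemma: it is a collection of citations — Franks for items~(1)--(2), Potrie (and Ures for the absolute case) for items~(3)--(5), and Hammerlindl--Ures for the final foliation-preservation statement. Your write-up is therefore a reconstruction of the cited arguments rather than an alternative route, and the core mechanism you identify — converting intrinsic leaf separation into ambient $\mathbb{R}^3$ separation via quasi-isometry of the lifted foliations and deriving a contradiction with $\|H-\mathrm{Id}\|<L$ — is indeed the engine of those proofs. The Franks contraction-mapping construction in items~(1)--(2), the injectivity-on-stable-leaves argument and the open/closed/nonempty surjectivity argument in item~(3), and the saturation argument identifying $h({\cal F}^{su}_f)$ with ${\cal F}^{su}_A$ in the final part are all sound in outline.

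There are, however, two genuine gaps. For item~(4) with $*=cs,cu$ you invoke ``boundedness only in the transverse expanding direction'', i.e.\ that the $E^u_A$-component of $F^n(\tilde{y})-F^n(\tilde{x})$ stays bounded whenever $\tilde{y}\in\tilde{\cal F}^{cs}_f(\tilde{x})$. This is the ``almost parallel'' property of the lifted $cs$- and $cu$-foliations (their leaves stay within a uniformly bounded Hausdorff distance of translates of $E^{cs}_A$, resp.\ $E^{cu}_A$), a nontrivial theorem of Brin--Burago--Ivanov and Potrie; it is not recorded in Lemma~\ref{lem:quasi-isometric}, which only asserts quasi-isometry of the one-dimensional foliations plus global product structure, and it does not follow from that lemma by the kind of one-line argument you apply. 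You should make this an explicit additional input, and you also need a separate surjectivity argument to upgrade the resulting inclusion $H(\tilde{\cal F}^{cs}_f(\tilde{x}))\subset\tilde{\cal F}^{cs}_A(H(\tilde{x}))$ to equality. For item~(5), you correctly confine the fiber $H^{-1}(H(\tilde{x}))$ to $\tilde{\cal F}^c_f(\tilde{x})$ and bound its diameter, but you then assert the fiber is connected — hence a compact arc — without justification. Connectedness amounts to monotonicity of $H$ along center leaves, which is precisely where Potrie's proof of item~(5) does real work; it does not follow from the steps you have laid out.
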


\begin{proof}
	Item 1 and 2 are well-known results by Franks \cite{F}. Item 3, 4 and 5 were proved by Potrie in \cite{Po2}. Item 5 see also \cite{U} for absolutely partially hyperbolic diffeomorphisms. The fact that $h$ is a conjugacy preserving all invariant foliations when $f$ is $su$-integrable was proved by Hammerlindl and Ures \cite{HU}. 
\end{proof}

In general, if $f$ is topologically conjugate to $A$ but not Anosov, then the conjugacy $h^{-1}$ is not H\"older continuous. However, we can show that $h^{-1}$ is H\"older continuous when restricted to every leaf of ${\cal F}^s_A$ and ${\cal F}^u_A$.

\begin{lemma}\label{lem:Holder}
	Under the assumption in Lemma \ref{lem:semi-conjugacy}, there exist constants $C>0$ and $0<\beta<1$, such that for every $x\in\mathbb{T}^3$ and $y\in{\cal F}^*_A(x)$, $*=s,u$, we have
	$$
	d_{{\cal F}^*_f}(h^{-1}(x),h^{-1}(y))\leq C\cdot d_{{\cal F}^*_A}(x,y)^{\beta}.
	$$
\end{lemma}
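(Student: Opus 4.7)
The plan is to pass to the universal cover and iterate backward along the stable foliation (respectively forward along the unstable foliation), combining three inputs: the semiconjugacy $H\circ F=A\circ H$ with uniform displacement $\|H-{\rm Id}\|<L$; the quasi-isometry of $\tilde{\cal F}^s_f$ and $\tilde{\cal F}^u_f$ from Lemma \ref{lem:quasi-isometric}; and the uniform contraction/expansion rates of $f$ balanced against the exact rates $|\lambda_s|<1<|\lambda_u|$ of $A$. I describe the stable case in detail; the unstable case follows by applying the argument to $f^{-1}$ and $A^{-1}$.

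Fix $\sigma$ with $\max\{|\lambda_s|,\|Df|_{E^s_f}\|_\infty\}<\sigma<1$, which is possible because both quantities lie in $(0,1)$. For $x\in\mathbb{T}^3$ and $y\in{\cal F}^s_A(x)$, write $r:=d_{{\cal F}^s_A}(x,y)$ and set $x':=h^{-1}(x)$, $y':=h^{-1}(y)$. By Lemma \ref{lem:semi-conjugacy}(3), $y'\in{\cal F}^s_f(x')$, and one may choose lifts $\tilde x',\tilde y'$ on a common leaf of $\tilde{\cal F}^s_f$ realizing the leaf distance; set $\tilde x=H(\tilde x'), \tilde y=H(\tilde y')$. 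Since $\tilde{\cal F}^s_A$ consists of affine lines parallel to the one-dimensional subspace $E^s_A$, Euclidean and intrinsic distances coincide, so $|\tilde x-\tilde y|=r$. For any integer $n\geq 0$, the identity $H\circ F^{-n}=A^{-n}\circ H$ combined with $\|H-{\rm Id}\|<L$ gives
\begin{equation*}
|F^{-n}\tilde x'-F^{-n}\tilde y'|\leq 2L+|A^{-n}\tilde x-A^{-n}\tilde y|=2L+|\lambda_s|^{-n}r.
\end{equation*}
Applying Lemma \ref{lem:quasi-isometric} and then contracting back by $F^n$, which shrinks leaf distance on $\tilde{\cal F}^s_f$ by at most $\sigma^n$, yields
\begin{equation*}
d_{\tilde{\cal F}^s_f}(\tilde x',\tilde y')\leq\sigma^n\bigl[a(2L+|\lambda_s|^{-n}r)+b\bigr]=(2aL+b)\sigma^n+ar(\sigma/|\lambda_s|)^n.
\end{equation*}
Optimizing $n$ so that $|\lambda_s|^n\asymp r$ (i.e.\ $n\sim\log(1/r)/\log(1/|\lambda_s|)$) makes both terms of order $\sigma^n\asymp r^\beta$ with $\beta:=\log(1/\sigma)/\log(1/|\lambda_s|)$. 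Since $|\lambda_s|<\sigma<1$, we have $\beta\in(0,1)$, yielding $d_{{\cal F}^s_f}(x',y')\leq C\,r^\beta$ for all sufficiently small $r$; for $r$ in a bounded range the estimate is absorbed into $C$ via the direct quasi-isometric bound.

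The unstable case is strictly analogous: fix $\mu$ with $1<\mu<\min\{|\lambda_u|,\inf_p\|Df|_{E^u_f(p)}\|\}$, iterate forward (so $|A^n\tilde x-A^n\tilde y|=|\lambda_u|^n r$), and use that $F^{-n}$ contracts $\tilde{\cal F}^u_f$ by at most $\mu^{-n}$; the resulting Hölder exponent is $\log\mu/\log|\lambda_u|\in(0,1)$. Taking the minimum of the two exponents gives the common $\beta\in(0,1)$ of the statement. The main subtlety, and the point where $A$ being Anosov is essential, is the freedom to insert $\sigma$ strictly between $|\lambda_s|$ and $1$ (and $\mu$ strictly between $1$ and $|\lambda_u|$); the nonemptiness of these intervals is exactly what makes $\beta<1$.
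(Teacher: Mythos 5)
Your argument is correct, but it takes a genuinely different technical route from the paper's. The paper works directly on $\mathbb{T}^3$: by a compactness argument it fixes scales $\e_0,\delta_0>0$ so that $d_{{\cal F}^u_A}(x,y)<\delta_0$ forces $d_{{\cal F}^u_f}(h^{-1}(x),h^{-1}(y))<\e_0$, then iterates forward by $A$ and $f$ until the $A$-leaf distance first exceeds $\delta_0$, and pulls back by $f^{-k}$ using the uniform rate $\mu=\inf_z m(Df|_{E^u_f(z)})$, getting the exponent from $|\lambda_u|^\beta<\mu$. You instead lift to $\mathbb{R}^3$ and replace the compactness input by two cited facts: the Franks bound $\|H-{\rm Id}\|<L$ and the quasi-isometry of $\tilde{\cal F}^s_f,\tilde{\cal F}^u_f$ from Lemma~\ref{lem:quasi-isometric}. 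This has the advantage of making the ``boundedness at a fixed scale'' step fully explicit (your constant $a(2L+|\lambda_s|^{-1})+b$) rather than asserted via uniform continuity of $h^{-1}$ with respect to the leaf-intrinsic metrics, which the paper leaves implicit. The one step you should spell out is the compatibility of your lift choices, namely that $|\tilde x-\tilde y|$ really equals $d_{{\cal F}^s_A}(x,y)$: this holds because the stable and unstable leaves of the Anosov automorphism $A$ on $\mathbb{T}^3$ are densely immersed lines, so the covering projection restricted to a lifted leaf is injective and the intrinsic leaf metric coincides with Euclidean length along the lift; the same goes for ${\cal F}^s_f$ since $h$ conjugates them. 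Both proofs ultimately balance the exact rate of $A$ against a uniform rate of $f$, and your closing remark puts the emphasis in exactly the right place: the nonemptiness of $(|\lambda_s|,1)$ and $(1,|\lambda_u|)$, i.e.\ that $A$ is Anosov, is what leaves room for $\sigma$ and $\mu$ and yields $\beta<1$. (Note, as in the paper, the estimate is only meaningful on bounded leaf distances, since for $d_{{\cal F}^s_A}(x,y)$ large the left side grows linearly by quasi-isometry; this is consistent with how the lemma is applied in Proposition~\ref{prop:rigidity}.)
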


\begin{proof}
	We first prove this fact for $y\in{\cal F}^u_A(x)$. We fix $\e_0,\delta_0>0$, such that locally if $d_{{\cal F}^u_A}(x,y)<\delta_0$, then $d_{{\cal F}^u_f}(h^{-1}(x),h^{-1}(y))<\e_0$ for every $x\in\mathbb{T}^3$ and $y\in{\cal F}^u_A(x)$. Now we assume that
	$$
	d_{{\cal F}^u_A}(x,y)\ll\delta_0.
	$$
	Let $k$ be the largest positive integer such that $d_{{\cal F}^u_A}(A^kx,A^ky)<\delta_0$, then we have
	$$
	d_{{\cal F}^u_A}(x,y)>|\lambda_u|^{-(k+1)}\cdot\delta_0.
	$$
	
	On the other hand, we have
	$$
	d_{{\cal F}^u_f}(f^k\circ h^{-1}(x),f^k\circ h^{-1}(y))=d_{{\cal F}^u_f}(h^{-1}\circ A^k(x),h^{-1}\circ A^k(y))<\e_0.
	$$
	This implies
	$d_{{\cal F}^u_f}(h^{-1}(x),h^{-1}(y))<\mu^{-k}\cdot\e_0$, where $\mu=\inf_{z\in\mathbb{T}^3}m(Df|_{E^u_f(z)})>1$.
	
	If $\mu\geq|\lambda_u|$, then we have
	$$
	d_{{\cal F}^u_f}(h^{-1}(x),h^{-1}(y))<\frac{\delta_0}{|\lambda_u\cdot\e_0|}\cdot d_{{\cal F}^u_A}(x,y).
	$$
	Otherwise, we take $0<\beta<1$ such that $|\lambda_u|^{\beta}<\mu$. Then we have
	$$
	d_{{\cal F}^u_f}(h^{-1}(x),h^{-1}(y))<\mu^{-k}\cdot\e_0<|\lambda_u|^{-k\beta}\cdot\e_0< \frac{\e_0|\lambda_u|^{\beta}}{\delta_0}\cdot d_{{\cal F}^u_A}(x,y)^{\beta}.
	$$
	This proves that $h^{-1}$ is H\"older continuous on every leaf of ${\cal F}^u_A$. The proof for $y\in{\cal F}^s_A(x)$ is the same.
\end{proof}


\begin{notation*}
	Let $p\in{\rm Per}(f)$ be a periodic point of $f$ with period $\pi(p)$. We denote by
	$$
	\lambda_c(p)=\|Df^{\pi(p)}|_{E^c_f(p)}\|^{\frac{1}{\pi(p)}}.
	$$
	Then $\log\lambda_c(p)$ is equal to the center Lyapunov exponent of $p$. Moreover, we denote $\lambda_c(A)=|\lambda_c|>1$, and $\log\lambda_c(A)$ is equal to the center Lyapunov exponent of $A$.
\end{notation*}

\begin{lemma}\label{lem:growth-rate}
	Let $f$ be a $C^1$ partially hyperbolic diffeomorphism which is homotopic to an Anosov automorphism $A$ on $\mathbb{T}^3$. Then there exists a sequence of periodic points $\{p_n\}$ of $f$, such that $\lim_{n\rightarrow\infty}\lambda_c(p_n)\geq\lambda_c(A)$.
\end{lemma}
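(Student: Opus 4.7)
The plan is to combine a lower bound on the topological entropy of $f$ coming from the semi-conjugacy with $A$, an upper bound on the unstable Lyapunov exponent of $f$ coming from the quasi-isometric behavior of $\tilde{\mathcal F}^u_f$, and Ruelle's inequality, to produce hyperbolic ergodic measures whose center exponent is close to $\log\lambda_c(A)$; Katok's periodic approximation will then deliver the required sequence.

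First, since $h$ is a continuous surjection with $h\circ f=A\circ h$ (Lemma \ref{lem:semi-conjugacy}), $f$ factors onto $A$ and hence $h_{\mathrm{top}}(f)\geq h_{\mathrm{top}}(A)=\log|\lambda_c|+\log|\lambda_u|$. The variational principle then provides, for each $\varepsilon>0$, an ergodic $f$-invariant measure $\mu_\varepsilon$ with $h_{\mu_\varepsilon}(f)\geq\log|\lambda_c\lambda_u|-\varepsilon$. Next I would establish that every $f$-invariant ergodic measure $\mu$ satisfies $\chi^u(\mu)\leq\log|\lambda_u|$: lifting to $\mathbb{R}^3$ and combining $\|H-\mathrm{Id}\|\leq L$ with $H\circ F=A\circ H$ gives
\[
|F^n\tilde x-F^n\tilde y|\;\leq\;|A^n(H(\tilde x)-H(\tilde y))|+2L\;\leq\;C|\lambda_u|^n
\]
whenever $|\tilde x-\tilde y|$ is bounded, and the quasi-isometry of $\tilde{\mathcal F}^u_f$ (Lemma \ref{lem:quasi-isometric}) upgrades this to the same bound on unstable arc-length $d_{\tilde{\mathcal F}^u_f}(F^n\tilde x,F^n\tilde y)\leq C'|\lambda_u|^n$. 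Comparing against the Pesin lower bound $d^u(f^nx,f^ny)\geq c\,e^{n(\chi^u(\mu)-\varepsilon)}d^u(x,y)$ along Pesin unstable manifolds of positive $\mu$-measure then yields the desired inequality.

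Ruelle's inequality applied to $\mu_\varepsilon$ gives
\[
\log|\lambda_c\lambda_u|-\varepsilon\;\leq\;h_{\mu_\varepsilon}(f)\;\leq\;\chi^c_+(\mu_\varepsilon)+\chi^u(\mu_\varepsilon)\;\leq\;\chi^c_+(\mu_\varepsilon)+\log|\lambda_u|,
\]
so $\chi^c(\mu_\varepsilon)\geq\log|\lambda_c|-\varepsilon>0$ and $\mu_\varepsilon$ is hyperbolic. Since $f$ is $C^{1+\alpha}$, Katok's horseshoe construction will produce periodic points $p_\varepsilon$ of $f$ whose Lyapunov exponents are within $\varepsilon$ of those of $\mu_\varepsilon$; taking $\varepsilon=1/n$ and diagonalizing yields the required sequence $\{p_n\}$ with $\liminf_{n\to\infty}\lambda_c(p_n)\geq\lambda_c(A)$.

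I expect the main obstacle to be the second step: one must carefully convert the linear large-scale comparison between $F$ and $A$ into a sharp exponential bound on unstable arc-length growth of $f$, handling simultaneously the non-injectivity of $H$ along center arcs and the merely quasi-isometric (rather than Lipschitz) embedding of unstable leaves in $\mathbb{R}^3$. The passage from the uniform upper bound to the infinitesimal estimate $\chi^u(\mu)\leq\log|\lambda_u|$ then rests on Pesin theory adapted to this partially hyperbolic setting.
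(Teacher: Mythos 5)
Your proposal takes a genuinely different route from the paper's, and it contains a real gap relative to the statement as written.

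The paper's proof is more elementary and more direct: it lifts to $\mathbb{R}^3$, takes a single \emph{center} arc $J^c_f$ of length $\sim L$, uses $H(F^n(J^c_f))=A^n(J^c_A)$ together with $\|H-\mathrm{Id}\|\leq L$ to get a lower bound $|F^n(J^c_f)|\gtrsim \lambda_c(A)^n$, and then, by a mean-value argument along the arc, finds a point $x_n\in J^c_f$ whose $n$-step Birkhoff average of $\log\|Df|_{E^c}\|$ exceeds $\log\lambda_c(A)-o(1)$. Accumulating the empirical measures at $x_n$ gives a hyperbolic ergodic $\mu_0$ with $\int\log\|Df|_{E^c}\|\,d\mu_0\geq\log\lambda_c(A)$, and Liao's $C^1$ shadowing lemma then yields the periodic approximants. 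The center exponent is read off directly from center arc growth; no entropy, no Ruelle inequality, no bound on the unstable exponent is needed.

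Your route instead goes through $h_{\mathrm{top}}(f)\geq h_{\mathrm{top}}(A)$, an upper bound $\chi^u(\mu)\leq\log|\lambda_u|$ for every ergodic $\mu$, and Ruelle's inequality, followed by Katok's horseshoe theorem. This is a legitimate alternative for $C^{1+\alpha}$ diffeomorphisms, but the lemma is stated for $C^1$, and your argument invokes regularity strictly better than $C^1$ at two essential points: (i) the Pesin / bounded-distortion lower bound on $d^u(f^nx,f^ny)$ needed to upgrade the arc-length estimate to the infinitesimal inequality $\chi^u(\mu)\leq\log|\lambda_u|$; and (ii) Katok's periodic approximation of hyperbolic ergodic measures. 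Neither holds for bare $C^1$. Indeed, without bounded distortion there is no reason a uniform upper bound on unstable arc-length growth should control the pointwise derivative $\|Df^n|_{E^u(x)}\|$ for $\mu$-typical $x$, so $\chi^u(\mu)\leq\log|\lambda_u|$ for \emph{every} ergodic $\mu$ is not clear in the $C^1$ category. The paper circumvents both issues: it never needs an upper bound on $\chi^u$, and it replaces Katok by Liao's shadowing lemma, which works for $C^1$ dominated splittings.

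If you are content to prove the lemma only under the $C^{1+\alpha}$ hypothesis of the paper's main theorem, your plan can be made to work (and the comparison of $h_{\mathrm{top}}$ with Lyapunov exponents is a nice structural viewpoint). But to recover the $C^1$ statement as written, you should replace the detour through entropy and $\chi^u$ with the direct center-arc growth argument, and use Liao's shadowing lemma in place of Katok's theorem.
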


\begin{proof}
	From Lemma \ref{lem:semi-conjugacy}, let $F:\mathbb{R}^3\rightarrow\mathbb{R}^3$ be a lift of $f$ and $H:\mathbb{R}^3\rightarrow\mathbb{R}^3$ be a lift of the semi-conjugacy $h$.
	The map $H$ satisfies $|H(\tilde{x})-\tilde{x}|\leq L$ for every  $\tilde{x}\in\mathbb{R}^3$.
	We can choose two points $\tilde{x},\tilde{y}\in\mathbb{R}^3$, such that $\tilde{y}\in\tilde{{\cal F}}^c_f(\tilde{x})$ and $|\tilde{x}-\tilde{y}|=3L$. Then $|H(\tilde{x})-H(\tilde{y})|\ge L>0$ and $H(\tilde{y})\in\tilde{{\cal F}}^c_A(H(\tilde{x}))$.
	
	Denote by $J^c_f$ the the arc connecting $\tilde{x}$ and $\tilde{y}$ in $\tilde{{\cal F}}^c_f(\tilde{x})$, and $J^c_A$ the arc connecting $H(\tilde{x})$ and $H(\tilde{y})$ in $\tilde{{\cal F}}^c_A(H(\tilde{x}))$, then we have
	$$
	H(F^n(J^c_f))=A^n(J^c_A), \qquad \forall n\geq 0.
	$$
	
	Then for every $n$ large enough, we have
	$$
	|F^n(J^c_f)|\geq|A^n(J^c_A)|-2L>\frac{|J^c_A|}{2}\cdot\lambda_c(A)^n.
	$$
    (for a smooth arc $J$, $|J|$ denotes the arc length of $J$.)
	This implies that for every $n$ large enough, there exists $\tilde{x}_n\in J^c_f$, such that for $x_n=\pi(\tilde{x}_n)$,
	$$
	\frac{1}{n}\sum_{i=0}^{n-1}\log\|Df|_{E^c(f^i(x_n))}\|=\frac{1}{n}\sum_{i=0}^{n-1}\log\|DF|_{E^c(F^i(\tilde{x}_n))}\|
	>\log\lambda_c(A)+\frac{\log|J^c_A|-\log2|J^c_f|}{n}.
	$$
	
	Taking an accumulation point $\mu_0$ of the sequence of measures $\{\sum_{i=0}^{n-1}\delta_{f^i(x_n)}/n\}$, we get that $\mu_0$ is an invariant probability measure of $f$ and
	$$
	\int\log\|Df|_{E^c(x)}\|{\rm d}\mu_0(x)\geq\log\lambda_c(A).
	$$
	By ergodic decomposition theorem, we can assume $\mu_0$ is ergodic. Since $\mu_0$ is a hyperbolic measure, by Liao's shadowing lemma (e.g., see \cite{Gan1, Gan2}, there exists a sequence of periodic points $\{p_n\}$ of $f$, such that $\lim_{n\rightarrow\infty}\lambda_c(p_n)\geq\lambda_c(A)$.
\end{proof}

\begin{theorem}[\cite{BGY}]\label{thm:BGY}
	Let $p$ be a hyperbolic periodic point of a diffeomorphism $f$ on a compact manifold. Assume that its homoclinic class $H(p)$ admits a (homogeneous) dominated splitting $T_{H(p)}M=E\oplus F$ with $E$ contracting and ${\rm dim}(E)={\rm ind}(p)$. If $f$ is uniformly $F$-expanding at the period on the set of periodic points $q$ homoclinically related to $p$, then $F$ is uniformly expanding on $H(p)$.	
\end{theorem}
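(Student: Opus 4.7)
The plan is to argue by contradiction: assume $F$ is not uniformly expanding on $H(p)$. By a standard variational criterion (Cao) for uniform expansion of a continuous $Df$-invariant subbundle on a compact invariant set, there exists an $f$-invariant ergodic probability measure $\mu$ on $H(p)$ whose smallest Lyapunov exponent along $F$ is not strictly positive. More precisely, given any $\log \lambda_0 > 0$ below the uniform period-expansion rate furnished by the hypothesis, one finds $\mu$ with $\chi_F^-(\mu) < \log \lambda_0$. Since $E\oplus F$ is a dominated splitting with $E$ uniformly contracting, $\mu$ has at least $\dim E = \mathrm{ind}(p)$ strictly negative exponents, all coming from $E$.

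The goal is then to produce a sequence of periodic orbits $(q_n)$ homoclinically related to $p$, of index $\mathrm{ind}(p)$, whose normalized $F$-expansion at the period converges to $\chi_F^-(\mu) < \log\lambda_0$; this would directly contradict the hypothesis of uniform $F$-expansion at the period over periodic points homoclinically related to $p$. The orbits $q_n$ are constructed by combining Liao's shadowing lemma with Pliss's lemma adapted to the dominated splitting. At a $\mu$-Lyapunov-regular recurrent point $x$, uniform contraction of $E$ yields infinitely many $(E,\lambda)$-hyperbolic times with uniform local stable size, while Pliss selects a positive-density subsequence of times along which the averaged $F$-expansion over the closing segment stays close to $\chi_F^-(\mu)$. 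Closing at such times produces periodic orbits $q_n$ whose empirical measures converge weakly-$*$ to $\mu$, whose $E$-direction contributes $\dim E$ negative exponents, and whose $F$-expansion at the period satisfies
\[
\frac{1}{\pi(q_n)} \log m\!\left(Df^{\pi(q_n)}|_{F(q_n)}\right) \longrightarrow \chi^-_F(\mu).
\]
Domination rules out any additional contracting subdirection in $F$ at $q_n$ for large $n$, so $q_n$ has index exactly $\mathrm{ind}(p)$.

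The main obstacle, and the technical heart of the argument, is showing that each $q_n$ is actually homoclinically related to $p$. The stable manifolds $W^s(q_n)$ have uniform size thanks to uniform contraction of $E$. The unstable manifolds $W^u(q_n)$ tangent to $F$, however, may degenerate as $n\to\infty$ since the $F$-expansion at $q_n$ weakens. The remedy exploits the density in $H(p)$ of periodic orbits $q'$ homoclinically related to $p$: choose such a $q'$ close to a point on the orbit of $q_n$. By hypothesis, $q'$ has uniform $F$-expansion at its period, so $W^u(q')$ has uniform size; the inclination lemma then yields a transverse intersection $W^u(q') \pitchfork W^s(q_n)$. Symmetrically, $W^s(q')$ has uniform size (from uniform contraction of $E$), while $W^u(q_n)$, though possibly small, is genuinely nondegenerate since $q_n$ is a hyperbolic periodic orbit; combined with the closeness of the two orbits, a persistence / inclination argument gives $W^u(q_n) \pitchfork W^s(q')$. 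Hence $q_n$ is homoclinically related to $q'$, and therefore to $p$, delivering the promised contradiction and completing the proof.
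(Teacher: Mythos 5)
The paper does not actually prove this statement: Theorem~\ref{thm:BGY} is imported verbatim from \cite{BGY} and used as a black box in the proof of Lemma~\ref{lem:Anosov}, so there is no ``paper's proof'' to compare with. Your outline does reproduce the high-level strategy of Bonatti--Gan--Yang (argue by contradiction, extract an ergodic measure with deficient $F$-exponent via a Cao-type criterion, close up via Liao-type shadowing, and then verify that the closed orbits are homoclinically related to $p$). However, there is a genuine gap at the step where you control the index of the closed orbits.

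You write that ``Domination rules out any additional contracting subdirection in $F$ at $q_n$ for large $n$, so $q_n$ has index exactly $\mathrm{ind}(p)$.'' This is false: domination of $E$ by $F$ only forbids vectors in $F$ from being contracted more strongly than those in $E$; it in no way prevents $F$ from containing a contracting direction. Cao's criterion only guarantees an ergodic $\mu$ with $\chi_F^-(\mu)\le 0$, and if $\chi_F^-(\mu)<0$ strictly, then a periodic orbit $q_n$ produced by closing a segment over which the total $F$-growth is roughly $e^{\pi(q_n)\chi_F^-(\mu)}<1$ satisfies $m(Df^{\pi(q_n)}|_{F(q_n)})<1$: it has a contracting direction inside $F$, hence index strictly larger than $\mathrm{ind}(p)$, and therefore cannot be homoclinically related to $p$. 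The hypothesis then simply does not apply to $q_n$, and no contradiction follows. Resolving this is exactly what Liao's \emph{selecting} lemma (not merely shadowing plus Pliss) is for: from the failure of uniform $F$-expansion at some rate $\lambda\in(1,\lambda_0)$ one extracts quasi-hyperbolic strings on which the cumulative $F$-expansion from every intermediate time to the endpoint stays bounded below by a slightly smaller but still positive rate, while the total expansion over the whole string is close to $\lambda$. These intermediate estimates are also what Liao's shadowing lemma actually requires on the $F$-side (Pliss times for the contracting bundle $E$ alone are not enough), and they guarantee that the resulting periodic orbit is hyperbolic of index exactly $\mathrm{ind}(p)$. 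Without this selecting step your argument does not close.
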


\begin{lemma}\label{lem:Anosov}
	Let $f$ be a $C^1$ partially hyperbolic diffeomorphism which is homotopic to an Anosov automorphism $A$ on $\mathbb{T}^3$. If $\lambda_c(p)=\lambda_c(q)$ for every $p,q\in{\rm Per}(f)$, then $f$ is Anosov.
\end{lemma}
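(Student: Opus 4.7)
The plan is to apply Theorem~\ref{thm:BGY} to the global dominated splitting $T\mathbb{T}^3 = E^s_f \oplus E^{cu}_f$ (with $E^{cu}_f := E^c_f \oplus E^u_f$), and then to promote the uniform expansion of $E^{cu}_f$ from a single homoclinic class to all of $\mathbb{T}^3$.

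Lemma~\ref{lem:growth-rate} produces periodic points $\{p_n\}$ with $\lim_n \lambda_c(p_n) \ge \lambda_c(A) > 1$. Since $\lambda_c$ is constant on ${\rm Per}(f)$ by hypothesis, the common value $\lambda := \lambda_c(p)$ satisfies $\lambda \ge \lambda_c(A) > 1$; every periodic point of $f$ is therefore a hyperbolic saddle of index~$1$, with contracting direction $E^s_f$ and uniformly expanding two-plane $E^{cu}_f$. Now fix any $p \in {\rm Per}(f)$. On $H(p)$ the splitting $E^s_f \oplus E^{cu}_f$ is dominated with $E^s_f$ contracting and $\dim E^s_f = 1 = {\rm ind}(p)$, matching the hypotheses of Theorem~\ref{thm:BGY}. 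For any periodic $q$ homoclinically related to $p$, the internal dominated splitting $E^c_f \oplus E^u_f$ inside $E^{cu}_f$ forces the mini-norm of $Df^{\pi(q)}|_{E^{cu}_f(q)}$ to equal $\lambda_c(q)^{\pi(q)} = \lambda^{\pi(q)}$, so $f$ is uniformly $E^{cu}_f$-expanding at the period along those $q$. Theorem~\ref{thm:BGY} then yields uniform expansion of $E^{cu}_f$ on all of $H(p)$.

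The main obstacle is to conclude $H(p) = \mathbb{T}^3$, after which $E^s_f \oplus E^{cu}_f$ becomes a hyperbolic splitting on the whole torus and $f$ is Anosov. My approach would use the semi-conjugacy $h$ of Lemma~\ref{lem:semi-conjugacy}: since $A$ is Anosov, ${\rm Per}(A)$ is dense in $\mathbb{T}^3$, and over each $q_A \in {\rm Per}(A)$ of period $k$ the fiber $h^{-1}(q_A)$ is a compact $f^k$-invariant center arc (item 5), hence contains a periodic point $q_f$ of $f$, automatically of index~$1$ with $\lambda_c(q_f) = \lambda$. Using the global product structure of Lemma~\ref{lem:quasi-isometric} together with the uniform $E^{cu}_f$-expansion just obtained, so that $W^u(q_f)$ coincides with the $2$-dimensional leaf ${\cal F}^{cu}_f(q_f)$, and with $h$ mapping the $f$-foliations onto the corresponding linear $A$-foliations (items 3--4) so that the former inherit the density of the latter, one realizes transverse heteroclinic intersections of $W^u(q_f)$ with $W^s(p)$ and of $W^u(p)$ with $W^s(q_f)$, forcing $q_f \in H(p)$. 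Since the $q_f$'s lie within bounded distance $L$ of the dense set ${\rm Per}(A)$ and $H(p)$ is closed, $f$-invariant, and ${\cal F}^{cu}_f$-saturated along its points by the uniform expansion on $E^{cu}_f$, a density argument via ${\cal F}^{cu}_f$-saturation then gives $H(p) = \mathbb{T}^3$. The crux is the identification $q_f \in H(p)$, which rests on carefully combining the global product structure with the expansion produced by Theorem~\ref{thm:BGY}.
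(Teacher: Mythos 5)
Your high-level plan matches the paper's: produce periodic points with $\lambda_c\geq\lambda_c(A)>1$ via Lemma~\ref{lem:growth-rate}, feed the dominated splitting $E^s_f\oplus E^{cu}_f$ into Theorem~\ref{thm:BGY} on a homoclinic class $H_f(p)$, then show $H_f(p)=\mathbb{T}^3$. But the way you try to close the argument has a real gap.

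The crucial unsupported step is ``$W^u(q_f)$ coincides with the $2$-dimensional leaf ${\cal F}^{cu}_f(q_f)$.'' You justify this by ``the uniform $E^{cu}_f$-expansion just obtained,'' but that expansion was obtained only on $H(p)$, and whether $q_f\in H(p)$ is precisely what you are trying to prove---so the argument is circular. Even applying Theorem~\ref{thm:BGY} separately to $H(q_f)$ would not help: uniform expansion of $E^{cu}_f$ along a compact invariant set does not by itself tell you that the \emph{entire} leaf ${\cal F}^{cu}_f(q_f)$ is the unstable manifold of $q_f$, since points of ${\cal F}^{cu}_f(q_f)$ need not have backward orbits remaining in that set. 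The step you are missing is exactly the one the paper makes: from the hypothesis every periodic point has $\lambda_c>1$, a nontrivial $f$-periodic center arc $h^{-1}(h(q_f))$ would contain a periodic point with non-positive center exponent, so in fact $h^{-1}(h(q_f))=\{q_f\}$. With that collapse of the fiber, the semi-conjugacy identifies ${\cal F}^{cu}_f(q_f)$ with ${\cal F}^{cu}_A(h(q_f))=W^u_A(h(q_f))$ and transports the backward contraction of $A$ to conclude $W^u_f(q_f)={\cal F}^{cu}_f(q_f)$. You observe that the fiber is a compact invariant center arc containing a periodic point, but never note that the arc must be a single point, and never use it to pin down $W^u_f(q_f)$; the global product structure of Lemma~\ref{lem:quasi-isometric} does not substitute for this.

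There is also a difference at the final step. You never invoke conservativity, and instead sketch a density/saturation argument to force $H(p)=\mathbb{T}^3$. The paper instead shows $h(H_f(p))=\mathbb{T}^3$, obtains hyperbolicity of $H_f(p)$ from Theorem~\ref{thm:BGY}, uses that $h$ is injective on $W^s_f(p)$ (item 3 of Lemma~\ref{lem:semi-conjugacy}) to get $W^s_f(p)\subset H_f(p)$, and then rules out $H_f(p)\neq\mathbb{T}^3$ because a proper repeller is incompatible with conservativity. Your density approach would require careful justification (preimages of dense sets under $h$ need not be dense, and ${\cal F}^{cu}_f$-saturation of $H_f(p)$ is not automatic), whereas the paper's route sidesteps this. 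So in addition to the circularity above, the closing argument as written is too vague to stand on its own.
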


\begin{proof}
	From Lemma \ref{lem:growth-rate}, we know that $\lambda_c(p)\geq\lambda_c(A)>1$ for every $p\in{\rm Per}(f)$. From the semi-conjugacy $h:\mathbb{T}^3\rightarrow\mathbb{T}^3$ in Lemma \ref{lem:semi-conjugacy}, $h(p)$ is a periodic point of $A$ for every $p\in{\rm Per}(f)$. Moreover, we have $h^{-1}(h(p))=\{p\}$. Otherwise, $h^{-1}(h(p))$ is an $f$-periodic center arc, which must contain a periodic point of $f$ admitting non-positive center Lyapunov exponents.
	
	This implies that for every $p\in{\rm Per}(f)$, the unstable manifold $W^u_f(p)={\cal F}^{cu}_p(f)$ which is dense in $\mathbb{T}^3$ and tangent to $E^{cu}_f$ everywhere. On the other hand, since $h$ restricted to every stable leaf ${\cal F}^s_f(x)$ is a homeomorphism to ${\cal F}^s_A(h(x))$. If $h$ is injective at a point $p$, then $h$ is injective at every point of ${\cal F}^s_f(p)$. Let $H_f(p)=\overline{W^s_f(p)\pitchfork W^u_f(p)}$ be the homoclinic class of $p$ w.r.t. $f$. Then we have
	$$
	h(H_f(p))=h(\overline{W^s_f(p)\pitchfork W^u_f(p)})=\overline{h(W^s_f(p)\pitchfork W^u_f(p))}=\overline{W^s_A(h(p))\pitchfork W^u_A(h(p))}=H_A(h(p))=\mathbb{T}^3.
	$$
	
	Now we consider the partially hyperbolic splitting $T_{H_f(p)}\mathbb{T}^3=E^s_f\oplus E^{cu}_f$. Since $\lambda_c(p)\geq\lambda_c(A)>1$ for every $p\in{\rm Per}(f)$,  $f$ is uniformly $E^{cu}_f$-expanding at the period on all the periodic points in $H_f(p)$. Applying Theorem \ref{thm:BGY}, $E^{cu}_f$ is uniformly expanding and $H_f(p)$ is a hyperbolic set of $f$. Since $h$ is injective at every point of $W^s_f(p)$, $W^s_f(p)\subset H_f(p)$. If $H_f(p)\not=\mathbb{T}^3$, $H_f(p)$ would be a proper repeller, which is contradictory to the conservativity of $f$. This proves that $f$ is Anosov.
\end{proof}

\begin{corollary}\label{cor:equal}
	Let $f$ be a $C^1$ partially hyperbolic diffeomorphism which is homotopic to an Anosov automorphism $A$ on $\mathbb{T}^3$. If $\lambda_c(p)=\lambda_c(q)$ for every $p, q\in{\rm Per}(f)$, then $\lambda_c(p)=\lambda_c(A)$.
\end{corollary}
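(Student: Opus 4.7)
The plan is to establish the reverse inequality $\lambda \leq \lambda_c(A)$, where $\lambda$ denotes the common value $\lambda_c(p)$ for $p\in{\rm Per}(f)$. Lemma \ref{lem:growth-rate} already yields $\lambda \geq \lambda_c(A)$, so both inequalities together force the claim. The strategy is to run a symmetric version of Lemma \ref{lem:growth-rate}: I would produce an $f$-invariant ergodic hyperbolic measure with center Lyapunov exponent at most $\log\lambda_c(A)$ and then extract a periodic orbit via Liao's shadowing lemma.

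For the upper bound on center arc growth, I would start with an arbitrary center arc $J^c_f\subset\tilde{\cal F}^c_f$ with endpoints $\tilde x,\tilde y$. The image $H(J^c_f)$ lies on the straight center leaf $\tilde{\cal F}^c_A(H(\tilde x))$ with $|H(\tilde x)-H(\tilde y)|\leq|J^c_f|+2L$. Since $H\circ F^n=A^n\circ H$ and $\|H-{\rm Id}\|\leq L$, the endpoints of $F^n(J^c_f)$ satisfy
$$
|F^n\tilde x-F^n\tilde y|\leq \lambda_c(A)^n|H(\tilde x)-H(\tilde y)|+2L,
$$
and then the quasi-isometry of $\tilde{\cal F}^c_f$ from Lemma \ref{lem:quasi-isometric} would give
$$
|F^n(J^c_f)|\leq a\bigl(\lambda_c(A)^n(|J^c_f|+2L)+2L\bigr)+b.
$$
Writing $|F^n(J^c_f)|=\int_{J^c_f}\|DF^n|_{E^c}\|\,d\ell$ and applying the mean value theorem would then produce a point $\tilde x_n\in J^c_f$ with $\tfrac{1}{n}\log\|DF^n|_{E^c(\tilde x_n)}\|\leq\log\lambda_c(A)+O(1/n)$.

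Following the final step of the proof of Lemma \ref{lem:growth-rate}, I would take a weak-$*$ accumulation point $\mu_0$ of the empirical measures $\frac{1}{n}\sum_{i=0}^{n-1}\delta_{f^i(\pi(\tilde x_n))}$, obtaining $\int\log\|Df|_{E^c}\|\,d\mu_0\leq\log\lambda_c(A)$, and extract via ergodic decomposition an ergodic component $\mu_0'$ with the same inequality. By Lemma \ref{lem:Anosov}, $f$ is Anosov, so $E^c$ is uniformly expanded and $\mu_0'$ is hyperbolic. Liao's shadowing lemma would then provide periodic points $q_k$ of $f$ whose center exponents converge to that of $\mu_0'$; combined with the hypothesis $\lambda_c(q_k)=\lambda$, this finally forces $\log\lambda\leq\log\lambda_c(A)$, hence $\lambda=\lambda_c(A)$.

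The main obstacle is the shadowing step. The bound on $|F^n(J^c_f)|$ itself only uses the semi-conjugacy, not that $h$ is a homeomorphism; but converting a measure of small center exponent into a periodic orbit via Liao's lemma requires the measure to be hyperbolic, and this hyperbolicity is precisely what the Anosov conclusion of Lemma \ref{lem:Anosov} supplies, so the corollary is really a joint consequence of the growth-rate estimate and the Anosov dichotomy established in the previous lemma.
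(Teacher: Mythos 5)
Your proposal is correct and follows essentially the same route as the paper's own proof: both bound $|F^n(J^c_f)|$ from above by $\lambda_c(A)^n$ via the quasi-isometry of $\tilde{\cal F}^c_f$ and the Franks semi-conjugacy, extract a point on the arc with small average center derivative, pass to an ergodic accumulation measure $\mu_1$ with $\int\log\|Df|_{E^c}\|\,d\mu_1\leq\log\lambda_c(A)$, and then invoke the Anosov conclusion of Lemma~\ref{lem:Anosov} so that shadowing produces periodic orbits with center exponent at most $\log\lambda_c(A)$. The only cosmetic difference is in the arithmetic bookkeeping of the upper bound, and your remark about why the hyperbolicity of $\mu_1$ is needed (and where it comes from) accurately reflects the logical structure of the paper's argument.
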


\begin{proof}
	We only have to show that  there exists a sequence of periodic points $\{q_n\}$ of $f$, such that $$\lim_{n\rightarrow\infty}\lambda_c(q_n)\leq\lambda_c(A).$$
	
	This proof goes similarly with Lemma \ref{lem:growth-rate}.
	In fact, since $\tilde{{\cal F}}^c_f$ is quasi-isometric, there exist constants $a, b>0$, such that for every $n$ large enough,
	$$
	|F^n(J^c_f)|\leq a\cdot|F^n(\tilde{x})-F^n(\tilde{y})|+b\leq a\cdot(|A^n(J^c_A)|+2L)+b<2a|J^c_A|\cdot\lambda_c(A)^n.
	$$
	(for the definition of notations, see the proof of Lemma \ref{lem:growth-rate}.)
	So there exists $\tilde{y}_n\in J^c_f$, such that for $y_n=\pi(\tilde{y}_n)$,
	$$
	\frac{1}{n}\sum_{i=0}^{n-1}\log\|Df|_{E^c(f^i(y_n))}\|=\frac{1}{n}\sum_{i=0}^{n-1}\log\|DF|_{E^c(F^i(\tilde{y}_n))}\|
	<\log\lambda_c(A)+\frac{\log2a|J^c_A|-\log|J^c_f|}{n}.
	$$
	
	Taking an accumulation point $\mu_1$ of the sequence of measures $\{\sum_{i=0}^{n-1}\delta_{f^i(y_n)}/n\}$, we have that $\mu_1$ is an invariant probability measure of $f$ and
	$$
	\int\log\|Df|_{E^c(x)}\|{\rm d}\mu_1(x)\leq\log\lambda_c(A).
	$$
	By ergodic decomposition theorem, we can assume $\mu_1$ is ergodic. Since $f$ is Anosov, there exists a sequence of periodic points $\{q_n\}$ of $f$, such that $\lim_{n\rightarrow\infty}\lambda_c(q_n)\leq\lambda_c(A)$.
\end{proof}

The following theorem was essentially proved in the classical paper by Pugh-Shub-Wilkinson \cite{PSW}. We will need it in Section \ref{sec:exponents}.

\begin{theorem}[\cite{PSW}]\label{thm:PSW}
	Suppose that $f:M\rightarrow M$ is a $C^{1+\alpha}$ partially hyperbolic diffeomorphism with one-dimensional center bundle. If $f$ is dynamically coherent, then the local unstable and local stable holonomy maps are uniformly $C^1$ when restricted to each center unstable and each center stable leaf, respectively.
\end{theorem}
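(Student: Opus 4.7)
The plan is to apply the classical graph-transform/invariant-section strategy of Hirsch-Pugh-Shub to the restriction of $f$ to each center-unstable leaf, exploiting that the one-dimensionality of $E^c$ makes the relevant bunching condition automatic after passing to an iterate.

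First I would fix a center-unstable leaf $L={\cal F}^{cu}_f(x)$ and observe that $f$ restricts to a $C^{1+\alpha}$ diffeomorphism $f|_L:L\to{\cal F}^{cu}_f(f(x))$ between surfaces, carrying a $Df$-invariant dominated splitting $TL=E^c\oplus E^u$ in which $E^u$ is uniformly expanded. The foliation ${\cal F}^u_f\cap L$ is then the strong unstable foliation of this restricted, ``hyperbolic-like'' system, and any short arc in $L$ tangent to $E^c$ plays the role of a local center transversal. Thus the theorem reduces to the following claim on a surface: if a $C^{1+\alpha}$ diffeomorphism carries a dominated splitting $E^c\oplus E^u$ with $E^u$ expanding and $E^c$ one-dimensional, then the strong-unstable holonomy between two nearby $E^c$-transversals is $C^1$, with bounds uniform in the choice of transversals.

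Next I would realize this holonomy as the unique invariant section of a graph transform. In local charts adapted to the splitting, germs of $C^1$ curves tangent to $E^c$ are represented by germs of $C^1$ functions $\sigma:E^u_y\to E^c_y$, and $f$ acts on such germs through the usual graph-transform operator ${\cal T}$. Partial hyperbolicity on $L$ makes ${\cal T}$ a fiber contraction, and its unique continuous invariant section is the graph of the holonomy $h^u:\gamma_1\to\gamma_2$. By the $C^r$-section theorem of Hirsch-Pugh-Shub, this fixed section is $C^1$ as soon as an appropriate bunching inequality relating $\|Df|_{E^c}\|$, $m(Df|_{E^u})$ and $\alpha$ is satisfied, for instance
$$
\sup_{y\in L}\,\|Df|_{E^c(y)}\|^{1+\alpha}\cdot m(Df|_{E^u(y)})^{-1}<1.
$$

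The remaining step is to verify this inequality after possibly replacing $f$ by an iterate. Since $E^c$ is one-dimensional, $\|Df^n|_{E^c(y)}\|$ is a genuine scalar multiplicative cocycle, and in a suitable adapted metric domination gives $\|Df|_{E^c(y)}\|/m(Df|_{E^u(y)})<\theta<1$ uniformly in $y$. Consequently $\|Df^N|_{E^c(y)}\|^{1+\alpha}/m(Df^N|_{E^u(y)})$ decays exponentially in $N$, so the bunching inequality holds for $f^N$ with $N$ large enough; the $C^1$-section theorem then yields a $C^1$ invariant section for $f^N$, equivalently for $f$. All quantitative bounds depend only on the partial hyperbolicity data and $\|f\|_{C^{1+\alpha}}$, giving uniform $C^1$ bounds for the holonomy across all cu-leaves. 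The symmetric argument applied to $f^{-1}$, whose cu-leaves are the cs-leaves of $f$, handles the stable holonomies. The main obstacle, as in the original PSW paper, is the technical setup of the bundle of $C^1$ germs on which ${\cal T}$ acts and verifying genuine $C^1$-fiber-contraction under the bunching inequality; the one-dimensionality of $E^c$ is essential, since it reduces the multi-dimensional center bunching PSW had to manage in general to a scalar inequality arranged by a single iterate.
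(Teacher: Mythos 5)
The general setup you describe --- restrict to a cu-leaf, treat ${\cal F}^u_f$ as the strong foliation of the restricted $f^{-1}$-dynamics, and realize the holonomy as an invariant section of a graph transform --- is indeed the correct framework. But the bunching inequality you impose, and the claim that passing to an iterate makes it hold, are both false, and this is a fatal gap. You assert that domination ($\|Df|_{E^c}\|/m(Df|_{E^u})<\theta<1$) forces $\|Df^N|_{E^c}\|^{1+\alpha}/m(Df^N|_{E^u})$ to decay in $N$. Writing $\|Df|_{E^c}\|\approx\mu>1$ and $m(Df|_{E^u})\approx\lambda>\mu$, that quantity behaves like $(\mu^{1+\alpha}/\lambda)^N$, which decays only when $\mu^{1+\alpha}<\lambda$ --- a strictly stronger requirement than domination. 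With $\mu=2$, $\lambda=2.5$, $\alpha=1/2$, domination holds but $\mu^{3/2}\approx 2.83>\lambda$, so your inequality fails for every iterate: the extra $\alpha$ in the exponent consumes the domination gap whenever the center is expanding, which is exactly the case at hand since $\lambda_c(A)>1$.

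The real reason one-dimensionality makes bunching automatic is different from what you describe. The Pugh--Shub--Wilkinson bunching relevant to $C^1$ holonomy measures how far $Df|_{E^c}$ is from being conformal; it involves the ratio $\|Df|_{E^c}\|/m(Df|_{E^c})$ rather than the absolute size of $\|Df|_{E^c}\|$. When $\dim E^c=1$ this ratio is identically $1$, so the condition is satisfied trivially and no iterate is needed. Equivalently --- and more elementarily --- one can bypass the abstract $C^r$-section theorem altogether: if $h$ is the unstable holonomy between two center arcs inside a cu-leaf, then $h=f^n\circ h_n\circ f^{-n}$ and one-dimensionality lets one write
$$
Dh(x)=\lim_{n\to\infty}\ \prod_{k=1}^{n}\frac{\|Df|_{E^c(f^{-k}h(x))}\|}{\|Df|_{E^c(f^{-k}x)}\|},
$$
which converges because $d\bigl(f^{-k}x,f^{-k}h(x)\bigr)\to 0$ exponentially along unstable leaves while $\log\|Df|_{E^c}\|$ is H\"older continuous (using $f\in C^{1+\alpha}$ and H\"older regularity of $E^c$ inside the leaf). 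This bounded-distortion argument gives a uniformly $C^1$ holonomy with no auxiliary bunching inequality. As written, your proof rests on an inequality that genuinely fails, so it does not close.
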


\section{Joint $su$-integrability}\label{sec:integrability}

In this section, we prove that if $f$ is a $C^{1+\alpha}$ conservative partially hyperbolic diffeomorphism on $\mathbb{T}^3$ which is homotopic to an Anosov automotphism, and the center Lyapunov exponent of every periodic point of $f$ is equal to $\log\lambda_c(A)$, then $f$ is $su$-integrable.

Firstly, we need the following lemma.

\begin{lemma}\label{lem:center-metric}
	Let $f$ be a $C^{1+\alpha}$ partially hyperbolic diffeomorphism which is homotopic to an Anosov automorphism $A$ on $\mathbb{T}^3$. If $\lambda_c(p)=\lambda_c(A)$ for every periodic point $p\in{\rm Per}(f)$, then there exists a continuous metric $d^c(\cdot,\cdot)$ defined on every leaf of center foliation ${\cal F}^c_f$, such that
	\begin{itemize}
		\item There exists $K>1$, satisfying
		$1/K\cdot d_{{\cal F}^c_f}(x,y)<d^c(x,y)<K\cdot d_{{\cal F}^c_f}(x,y)$, for every $y\in{\cal F}^c_f(x)$;
		\item $d^c(f(x),f(y))=\lambda_c(A)\cdot d^c(x,y)$, for every $y\in{\cal F}^c_f(x)$;
		\item The stable and unstable holonomy maps between center leaves are isometries under $d^c(\cdot,\cdot)$ when restricted to each center stable and center unstable leaf, respectively. 
	\end{itemize}
\end{lemma}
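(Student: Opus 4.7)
The plan is to produce $d^c$ as the arc-length metric induced by a positive continuous density $e^\phi$ on each center leaf, where $\phi:\mathbb{T}^3\to\RR$ is obtained by solving a Liv\v{s}ic cohomological equation. First, note that Lemma \ref{lem:Anosov} together with the hypothesis $\lambda_c(p)=\lambda_c(A)$ implies $f$ is Anosov. Second, since $f$ is $C^{1+\alpha}$ and partially hyperbolic with one-dimensional center, the bundle $E^c_f$ is H\"older, so $\psi(x):=\log\|Df|_{E^c_f(x)}\|$ is a H\"older function on $\mathbb{T}^3$. The hypothesis translates exactly to the statement that for every periodic point $p$ of period $\pi(p)$,
$$\sum_{i=0}^{\pi(p)-1}\bigl(\psi(f^i(p))-\log\lambda_c(A)\bigr)=0.$$
Applying Liv\v{s}ic's theorem to the Anosov diffeomorphism $f$ then yields a H\"older continuous function $\phi:\mathbb{T}^3\to\RR$ satisfying the coboundary equation
$$\phi\circ f-\phi=\log\lambda_c(A)-\psi.$$

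Next, for $y\in{\cal F}^c_f(x)$, define
$$d^c(x,y):=\int_{[x,y]}e^{\phi(z)}\,d{\rm length}(z),$$
where $[x,y]$ is the center arc from $x$ to $y$. Compactness of $\mathbb{T}^3$ and continuity of $\phi$ force $e^\phi$ to lie in $[1/K,K]$ for some $K>1$, giving the first bullet. A direct change of variables using the coboundary equation proves the second bullet:
$$d^c(f(x),f(y))=\int_{[x,y]}e^{\phi(f(z))}\|Df|_{E^c(z)}\|\,dz=\int_{[x,y]}e^{\phi(z)+\log\lambda_c(A)}\,dz=\lambda_c(A)\cdot d^c(x,y).$$

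The main obstacle is the third bullet: the holonomy isometry property. By Theorem \ref{thm:PSW}, the stable holonomy $h^s$ between two center leaves in a common center stable leaf is uniformly $C^1$, and its derivative along center admits the product representation
$$\log(h^s)'(w)=\sum_{k=0}^{\infty}\log\frac{\|Df|_{E^c(f^k(w))}\|}{\|Df|_{E^c(f^k(h^s(w)))}\|}.$$
Substituting $\log\|Df|_{E^c(z)}\|=\log\lambda_c(A)-(\phi\circ f-\phi)(z)$ into each term turns the partial sums into a telescoping expression
$$\phi(w)-\phi(h^s(w))-\bigl(\phi(f^n(w))-\phi(f^n(h^s(w)))\bigr).$$
Since $w$ and $h^s(w)$ lie on a common stable leaf, their forward iterates $f^n(w),f^n(h^s(w))$ converge exponentially together, so the H\"older continuity of $\phi$ makes the correction term vanish as $n\to\infty$. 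Hence $\log(h^s)'(w)=\phi(w)-\phi(h^s(w))$, and a change of variables in the definition of $d^c$ shows $d^c(h^s(y),h^s(z))=d^c(y,z)$. The unstable case is identical after running the telescoping sum under $f^{-n}$ (using the exponential contraction of unstable leaves under backward iteration). The delicate points are the H\"older regularity of $\phi$ (needed to sum the telescoping tail) and the uniform $C^1$ regularity of the holonomies on center leaves supplied by Theorem \ref{thm:PSW}.
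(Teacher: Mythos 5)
Your proposal is correct and follows essentially the same route as the paper: reduce to $f$ Anosov via Lemma \ref{lem:Anosov}, solve the Liv\v{s}ic cohomological equation to obtain a H\"older $\phi$, and define $d^c$ by the density $e^\phi$ along center leaves; the first two bullets then fall out exactly as you compute. Where the paper simply asserts the holonomy isometry ``from this conformal structure,'' you supply the missing computation — the telescoping product formula $\log(h^s)'(w)=\phi(w)-\phi(h^s(w))$ from Theorem \ref{thm:PSW} plus the coboundary identity — which is the right justification and a welcome expansion of the paper's terse remark.
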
	

\begin{proof}
	From Lemma \ref{lem:Anosov} and Corollary \ref{cor:equal}, we know that $f$ is Anosov and $\lambda_c(p)=\lambda_c(A)$ for every $p\in{\rm Per}(f)$. Then Livshits Theorem implies that there exists a H\"older continuous function $\phi:\mathbb{T}^3\rightarrow\mathbb{R}$, such that
	$$
	\log\|Df|_{E^c_f(x)}\|=\phi(x)-\phi\circ f(x)+\log\lambda_c(A), \qquad \forall x\in\mathbb{T}^3.
	$$
	This implies that
	$$
	\lambda_c(A)\cdot\exp(\phi(x))=\|Df|_{E^c_f(x)}\|\cdot\exp(f\circ\phi(x)),\qquad \forall x\in\mathbb{T}^3.
	$$
	
	Now we can define a metric on every leaf of ${\cal F}^c_f$ as the following: for every $y\in{\cal F}^c_f(x)$, let $\gamma:[0,1]\rightarrow{\cal F}^c_f(x)$ be a $C^1$-parametrization with $\gamma(0)=x$ and $\gamma(1)=y$, then
	$$
	d^c(x,y):=\int_{0}^{1}\exp(\phi\circ\gamma(t))\cdot|\gamma'(t)|{\rm d}t.
	$$
	Since $\phi$ is bounded, there exists $K>1$, such that
	$$
	\frac{1}{K}\cdot d_{{\cal F}^c_f}(x,y)<d^c(x,y)<K\cdot d_{{\cal F}^c_f}(x,y), \qquad \forall y\in{\cal F}^c_f(x).
	$$
	Moreover, the cohomology equation implies $f$ is conformal on ${\cal F}^c_f$ under this metric:
	$$
	d^c(f(x),f(y))=\lambda_c(A)\cdot d^c(x,y), \qquad \forall y\in{\cal F}^c_f(x).
	$$
	
	From this conformal structure, we know that for every $x\in\mathbb{T}^3$ and $z\in{\cal F}^u_f(x)$, we denote $h^u_{x,z}:{\cal F}^c_f(x)\rightarrow{\cal F}^c_f(z)$ the holonomy map induced by the unstable foliation ${\cal F}^u_f$ in ${\cal F}^{cu}_f(x)$, then
	$$
	d^c(h^u_{x,z}(y_1),h^u_{x,z}(y_2))=d^c(y_1,y_2), \qquad \forall y_1,y_2\in{\cal F}^c_f(x).
	$$
	The same property holds for $z\in{\cal F}^s_f(x)$ and the holonomy map $h^s_{x,z}:{\cal F}^c_f(x)\rightarrow{\cal F}^c_f(z)$ induced by stable foliation ${\cal F}^s_f$ in ${\cal F}^{cs}_f(x)$.
\end{proof}

\begin{remark}\label{rk:center-metric}
	If the function $\phi$ is the solution of the cohomology equation
	$$
	\log\|Df|_{E^c_f}\|=\phi-\phi\circ f+\log\lambda_c(A),
	$$
	then $\phi+\kappa$ is also the solution for every $\kappa\in\mathbb{R}$. The corresponding center metric $d^c_1(\cdot,\cdot)$ defined by $\phi+\kappa$ also satisfies all the properties in Lemma \ref{lem:center-metric}. Actually, they satisfy
	$$
	d^c_1(x,y)=e^{\kappa}\cdot d^c(x,y),\qquad\forall y\in{\cal F}^c_f(x).
	$$
\end{remark}

\begin{proposition}\label{prop:integrability}
	Let $f$ be a $C^{1+\alpha}$ partially hyperbolic diffeomorphism which is homotopic to an Anosov automorphism $A$ on $\mathbb{T}^3$. If $\lambda_c(p)=\lambda_c(A)$ for every periodic point $p\in{\rm Per}(f)$, then the stable and unstable bundles of $f$ are jointly integrable.
\end{proposition}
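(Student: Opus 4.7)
My strategy is to upgrade the semi-conjugacy $h$ of Lemma~\ref{lem:semi-conjugacy} to a topological conjugacy and then pull back the affine $su$-foliation of $A$. First, Lemma~\ref{lem:Anosov} applied under the hypothesis gives that $f$ is Anosov, so $E^c_f$ sits inside the two-dimensional Anosov unstable bundle; moreover, by Lemma~\ref{lem:center-metric}, the center metric $d^c$ is scaled by $f$ by the exact factor $\lambda_c(A)>1$, which combined with the equivalence between $d^c$ and the Riemannian arc-length on center leaves shows that any center arc of positive length grows under $f^n$-iteration by a factor comparable to $\lambda_c(A)^n$.

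Second, I prove that $h$ is a homeomorphism by a center-fiber collapse argument. By item~(5) of Lemma~\ref{lem:semi-conjugacy}, each fiber $J_x:=h^{-1}(h(x))$ is a compact center arc of length at most $2aL+b$; the semi-conjugacy relation $h\circ f=A\circ h$ gives $f^n(J_x)\subseteq h^{-1}(h(f^n(x)))$, again a center arc of length at most $2aL+b$. If $J_x$ were nontrivial, the previous paragraph would force $|f^n(J_x)|\to\infty$, contradicting this uniform bound; hence $J_x=\{x\}$ for every $x$, so $h$ is a continuous bijection of $\mathbb{T}^3$ and therefore a homeomorphism conjugating $f$ to $A$. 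Moreover, applying the same lemma to the partially hyperbolic pair $(f^{-1},A^{-1})$ and combining with item~(3) of Lemma~\ref{lem:semi-conjugacy}, $h$ sends each leaf of $\mathcal{F}^s_f$ and $\mathcal{F}^u_f$ homeomorphically onto the corresponding leaf of $\mathcal{F}^s_A$ and $\mathcal{F}^u_A$.

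Third, I define $\mathcal{F}^{su}_f:=h^{-1}(\mathcal{F}^{su}_A)$, where $\mathcal{F}^{su}_A$ is the affine foliation of $A$ tangent to $E^s_A\oplus E^u_A$. For any $x\in L_f:=h^{-1}(L_A)$, the local stable saturation $S_x:=\bigcup_{y\in\mathcal{F}^u_{f,\mathrm{loc}}(x)}\mathcal{F}^s_{f,\mathrm{loc}}(y)$ of a small unstable plaque through $x$ is contained in $L_f$ (as $h$ maps each $\mathcal{F}^s_{f,\mathrm{loc}}(y)$ into $\mathcal{F}^s_A(h(y))\subseteq L_A$), and $h(S_x)$ is a relative open neighborhood of $h(x)$ in $L_A$. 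By injectivity of $h$, $S_x$ is a relative open neighborhood of $x$ in $L_f$, so $L_f$ locally coincides with the 2-dimensional set $S_x$, which contains $\mathcal{F}^s_f$ and $\mathcal{F}^u_f$ leaves through each of its points, and is therefore tangent to $E^s_f\oplus E^u_f$ everywhere. The main obstacle is the injectivity of $h$, which hinges on the Anosov conclusion of Lemma~\ref{lem:Anosov} together with the $\lambda_c(A)$-scaling from Lemma~\ref{lem:center-metric}; once this is secured, the rest is essentially foliation-theoretic bookkeeping around the conjugacy.
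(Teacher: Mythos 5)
Your second step contains the critical error. You claim that by applying item~(3) of Lemma~\ref{lem:semi-conjugacy} to the pair $(f^{-1},A^{-1})$, $h$ sends each leaf of $\mathcal{F}^u_f$ homeomorphically onto a leaf of $\mathcal{F}^u_A$. This is not what the lemma gives, and the asymmetry is exactly the point of the whole problem. Item~(3) is proved via the observation that if $\tilde y\in\tilde{\mathcal{F}}^s_f(\tilde x)$ then $|F^n\tilde y-F^n\tilde x|\to0$, hence $|A^nH\tilde y-A^nH\tilde x|\to0$, hence $H\tilde y-H\tilde x\in E^s_A$ because $E^s_A$ is \emph{precisely} the set of vectors contracted by $A^n$. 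The paper has fixed the convention $|\lambda_s|<1<|\lambda_c|<|\lambda_u|$, so for $A$ the exactly-contracting subspace is the one-dimensional $E^s_A$, matching the strong stable of $f$. When you pass to $(f^{-1},A^{-1})$, the strong stable of $f^{-1}$ is $\mathcal{F}^u_f$ (still one-dimensional), but the exactly-contracting subspace of $A^{-1}$ is $E^u_A\oplus E^c_A=E^{cu}_A$, which is \emph{two}-dimensional. The same argument therefore only yields $H(\tilde{\mathcal{F}}^u_f(\tilde x))\subseteq\tilde{\mathcal{F}}^{cu}_A(H\tilde x)$, which is already item~(4) with $*=cu$ and says nothing about $\mathcal{F}^u_A$. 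There is no version of item~(3) for the strong unstable foliation here.

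In fact the equality $h(\mathcal{F}^u_f)=\mathcal{F}^u_A$ is item~(3) of Theorem~\ref{thm:equivalent} and is \emph{equivalent} to $su$-integrability, which is the conclusion you are trying to prove; so the argument as written is circular. Concretely, for an accessible Anosov perturbation of $A$ the conjugacy $h$ is still a homeomorphism and still sends $\mathcal{F}^s_f$ leaf-to-leaf onto $\mathcal{F}^s_A$, yet it cannot send $\mathcal{F}^u_f$ onto $\mathcal{F}^u_A$: if it did, $h^{-1}(\mathcal{F}^{su}_A)$ would consist of proper accessibility classes, contradicting accessibility. Once the leaf-preservation claim is removed, the third step collapses, because the set $S_x$ obtained by $s$-saturating a $u$-plaque is not known to lie in $L_f=h^{-1}(L_A)$, nor to contain the $u$-plaque through each of its points; that failure is precisely the $4$-leg twisting that the paper's proof takes as its starting point. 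The paper instead argues by contradiction: it assumes non-integrability, uses the center metric of Lemma~\ref{lem:center-metric} (under which $s$- and $u$-holonomies between center leaves are isometries) to show that a small twisting defect of $d^c$-size $\kappa_0>0$ propagates linearly to size $n\kappa_0$ along an $n$-fold $su$-path, and derives a contradiction with the uniform $L$-bound $\|H-\mathrm{Id}\|<L$ together with the linearity of $A$'s foliations. Your first two steps (Anosov via Lemma~\ref{lem:Anosov}, the center metric, and the collapse of $h$-fibers to show $h$ is a homeomorphism) are fine, but the pull-back strategy cannot be salvaged without independently establishing the unstable-leaf preservation.
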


\begin{proof}
	Since $\lambda_c(p)=\lambda_c(A)$ for every periodic point $p\in{\rm Per}(f)$,
	let $d^c(\cdot,\cdot)$ be the metric on ${\cal F}^c_f$ which is defined in Lemma \ref{lem:center-metric}.
	
	If $E^s_f$ and $E^u_f$ are not jointly integrable, then we have 4-legs local twisting, i.e. there exist $x_0\in\mathbb{T}^3$, $y_0\in{\cal F}^s_f(x_0)$ and $z_0\in{\cal F}^u_f(x_0)$ which are very close to $x_0$ in the stable and unstable leaves of $x_0$, such that locally there exist $w_1\in{\cal F}^u_f(y_0)$ and $w_2\in{\cal F}^s_f(z_0)$ satisfying
	$$
	w_1\neq w_2, \qquad {\rm and} \qquad w_2\in{\cal F}^c_f(w_1).
	$$
	We denote $d^c(w_1,w_2)=\kappa_0>0$.
	
	\begin{claim}\label{clm:s-family}
		There exists a family of arcs ${\cal I}^s=\{I^s(x): x\in\mathbb{T}^3\}$ satisfying
		\begin{itemize}
			\item $I^s(x)\subset{\cal F}^s_f(x)$ admits $x$ as the start-point and varies continuously with respect to $x$.
			\item $I^s(x_0)$ admits $y_0$ as the end-point, and $I^s(z_0)$ admits $w_2$ as the end-point.
			\item Every $x_2\in{\cal F}^{cu}_f(x_1)$ satisfies that $I^s(x_2)=h^{cu}_{x_1,x_2}(I^s(x_1))$.
			\item There exist constants $0<l_1<l_2$, such that $l_1\leq|I^s(x)|\leq l_2$ for every $x\in\mathbb{T}^3$.
		\end{itemize}
	\end{claim}
	
	\begin{proof}[Proof of the claim]

		Let $I^s(x_0)$ be the arc from $x_0$ to $y_0$ in ${\cal F}^s_f(x_0)$, and $I^s(z_0)$ be the arc from $z_0$ to $w_2$ in ${\cal F}^s_f(z_0)$, then we can see that
		$$
		I^s(z_0)=h^{cu}_{x_0,z_0}(I^s(x_0)),
		$$
		where $h^{cu}_{x_0,z_0}:{\cal F}^s_f(x_0)\rightarrow{\cal F}^s_f(z_0)$ is the local holonomy map induced by ${\cal F}^{cu}_f$. Then for every point $x\in{\cal F}^{cu}_f(x_0)$, we can define
		$$
		I^s(x)=h^{cu}_{x_0,x}(I^s(x_0))\subset{\cal F}^s_f(x).
		$$
		Since every leaf of ${\cal F}^{cu}_f$ is homeomorphic to $\mathbb{R}^2$, and the lifting foliations $\tilde{\cal F}^{cu}_f, \tilde{\cal F}^s_f$ admit a global product structure,
		this tells us that $I^s(x)$ is well-defined for every point $x\in{\cal F}^{cu}_f(x_0)$.
		Moreover, the topological conjugacy $h$ maps ${\cal F}^{cu}_f$ into the linear ${\cal F}^{cu}_A$ implies that we can extend this family of stable arcs to $\mathbb{T}^3$:
		$$
		{\cal I}^s=\{I^s(x): x\in\mathbb{T}^3\}.
		$$
		Finally, since ${\cal F}^s_A$ and ${\cal F}^{cu}_A$ are linear foliations, the uniform continuity of $h$ gives us the constants $0<l_1<l_2$ such that $l_1\leq|I^s(x)|\leq l_2$ for every $x\in\mathbb{T}^3$.
	\end{proof}
	
	Symmetrically, we have the following claim.
	
	\begin{claim}\label{clm:u-family}
		There exists a family of arcs ${\cal I}^u=\{I^u(x): x\in\mathbb{T}^3\}$ satisfying
		\begin{itemize}
			\item $I^u(x)\subset{\cal F}^u_f(x)$ admits $x$ as the start-point and varies continuously with respect to $x$.
			\item $I^u(x_0)$ admits $z_0$ as the end-point, and $I^u(z_0)$ admits $w_1$ as the end-point.
			\item Every $x_2\in{\cal F}^{cs}_f(x_1)$ satisfies that $I^u(x_2)=h^{cs}_{x_1,x_2}(I^u(x_1))$.
			\item There exist constants $0<l_3<l_4$, such that $l_3\leq|I^u(x)|\leq l_4$ for every $x\in\mathbb{T}^3$.
		\end{itemize}
	\end{claim}
	
	We fix the orientation of $I^s(x_0)$ from $x_0$ to $y_0$ to be positive and assume it coincides with the positive orientation of ${\cal F}^s_f$. Since ${\cal F}^{cu}_f(x_0)$ is dense and ${\cal F}^s_f$ is orientable,  the orientation can be continuously extended to ${\cal I}^s$. Symmetrically, we fix the orientation of ${\cal I}^u$ which is positive from $x_0$ to $z_0$ at $I^u(x_0)$, and assume it coincides with the positive orientation of ${\cal F}^u$. Moreover, we assume that the arc from $w_1$ to $w_2$ has the same orientation with ${\cal F}^c_f$.
	
	For every $x\in\mathbb{T}^3$, we define the $su$-path $J^{su}(x)$ to be the path that goes through $I^s(x)$ to the end-point $y$ of $I^s(x)$, then go through $I^u(y)$ to the end-point $w'$. We call $w'$ the end-point of $J^{su}(x)$.
	Symmetrically, we can define the $us$-path $J^{us}(x)$ by going through $I^u(x)$ to the end-point $z$, then go through $I^s(z)$ to the end-point $w''$. We call $w''$ the end-point of $J^{us}(x)$.

	\begin{claim}\label{clm:c-family}
		There exists a family of arcs ${\cal I}^c=\{I^c(x): x\in\mathbb{T}^3\}$ satisfying
		\begin{itemize}
			\item $I^c(x)\subset{\cal F}^c_f(x)$ admits $x$ as the start-point and varies continuously with respect to $x$;
			\item For every $x\in\mathbb{T}^3$, denote $w'$ to be the end-point of $J^{su}(x)$ and $w''$ to be the end-point of $J^{us}(x)$, then $w''$ is the end-point of the arc $I^c(w')$. In particular, $w_2$ is the end-point of $I^c(w_1)$.
			\item For every $w'\in\mathbb{T}^3$ with $\partial I^c(w')=\{w',w''\}$, it satisfies
			$d^c(w',w'')=d^c(w_1,w_2)=\kappa_0>0$, and $I^c(w')$ from $w'$ to $w''$ has the same orientation with ${\cal F}^c_f$.	    	
		\end{itemize}	
	\end{claim}
	
	\begin{proof}[Proof of the claim]
		The definition of ${\cal I}^c$ comes from the second item of the claim. From the continuity of ${\cal I}^s$ and ${\cal I}^u$, and their holonomy invariance by ${\cal F}^{cu}_f$ and ${\cal F}^{cs}_f$, ${\cal I}^c$ is well defined and varies continuously. We only need to check the last item.
		
		For every $x\in{\cal F}^c_f(x_0)$, we denote $w'$ and $w''$ be the other endpoints of $su$-path $J^{su}(x)$ and $us$-path $J^{us}(x)$ respectively. The holonomy invariance of ${\cal I}^s$ and ${\cal I}^u$ implies $w',w''\in{\cal F}^c_f(w_1)$. Moreover, we consider the composition of holonomy maps $h^s_{x_0,y_0}:{\cal F}^c_f(x_0)\rightarrow{\cal F}^c_f(y_0)$ and $h^u_{y_0,w_1}:{\cal F}^c_f(y_0)\rightarrow{\cal F}^c_f(w_1)$, it is defined as
		$$
		h^{su}_{J^{su}(x_0)}:=h^u_{y_0,w_1}\circ h^s_{x_0,y_0}:{\cal F}^c_f(x_0)\rightarrow{\cal F}^c_f(w_1),
		$$
		where $h^{su}_{J^{su}(x_0)}(x)=w'$.
		
		Similarly, we have the holonomy map
		$$
		h^{us}_{J^{us}(x_0)}:=h^u_{z_0,w_2}\circ h^s_{x_0,z_0}:{\cal F}^c_f(x_0)\rightarrow{\cal F}^c_f(w_2)={\cal F}^c_f(w_1),
		$$
		which is the composition of the holonomy maps $h^u_{x_0,z_0}:{\cal F}^c_f(x_0)\rightarrow{\cal F}^c_f(z_0)$ and $h^s_{z_0,w_2}:{\cal F}^c_f(z_0)\rightarrow{\cal F}^c_f(w_2)$ and satisfies $h^{us}_{J^{us}(x_0)}(x)=w''$.
		
		Since the holonomy maps of stable and unstable foliations between center leaves are isometries under the metric $d^c(\cdot,\cdot)$ when restricted in each center-stable and center-unstable leaves, both $h^{su}_{J^{su}(x_0)}$ and $h^{us}_{J^{us}(x_0)}$ are isometries between ${\cal F}^c_f(x_0)$ and ${\cal F}^c_f(w_1)$ under the metric $d^c(\cdot,\cdot)$. This implies
		$$
		d^c(w_1,w')=d^c(x_0,x)=d^c(w_2,w'')=\kappa_0.
		$$
		So we have $d^c(w_1,w_2)=d^c(w',w'')$, that is $I^c(x)$ has the same length under the metric $d^c(\cdot,\cdot)$ for every $x\in{\cal F}^c_f(x_0)$. From the density of ${\cal F}^c_f(x_0)$ and continuity of ${\cal I}^c$, we prove the claim.		
	\end{proof}
	
	Now we lift these three family of arcs ${\cal I}^s$, ${\cal I}^u$ and ${\cal I}^c$ to the universal cover $\mathbb{R}^3$. We use the same notation for convenience.
	
	Now we fix $x^0\in\mathbb{R}^3$ and denote $z^0$ be the end-point of $I^u(x^0)$. Define inductively
	\begin{itemize}
		\item  $x^{i+1}\in\tilde{{\cal F}}^s_f(x^0)$ to be the end-point of $I^s(x^i)$ for $i=0,1,\cdots,n-1$;
		\item  $z^{i+1}\in\tilde{{\cal F}}^s_f(z^0)$ to be the end-point of $I^s(z^i)$ for $i=0,1,\cdots,n-1$.
	\end{itemize}
	Then we consider the end-point $w^0$ of $I^u(x^n)$, we can see that that $w^0\in\tilde{{\cal F}}^c_f(z^n)$. Moreover, there exists a sequence of points $\{w^0,w^1,\cdots,w^n\}\subset\tilde{{\cal F}}^c_f(z^n)$, such that
	\begin{itemize}
		\item $w^{i+1}$ is the end-point of $I^c(w^i)$ for $i=0,1,\cdots,n-1$;
		\item $w^n=z^n$ and $d^c(w^0,z^n)=n\cdot\kappa_0$.
	\end{itemize}

	\begin{figure}[htbp]
		\centering
		\includegraphics[width=15cm]{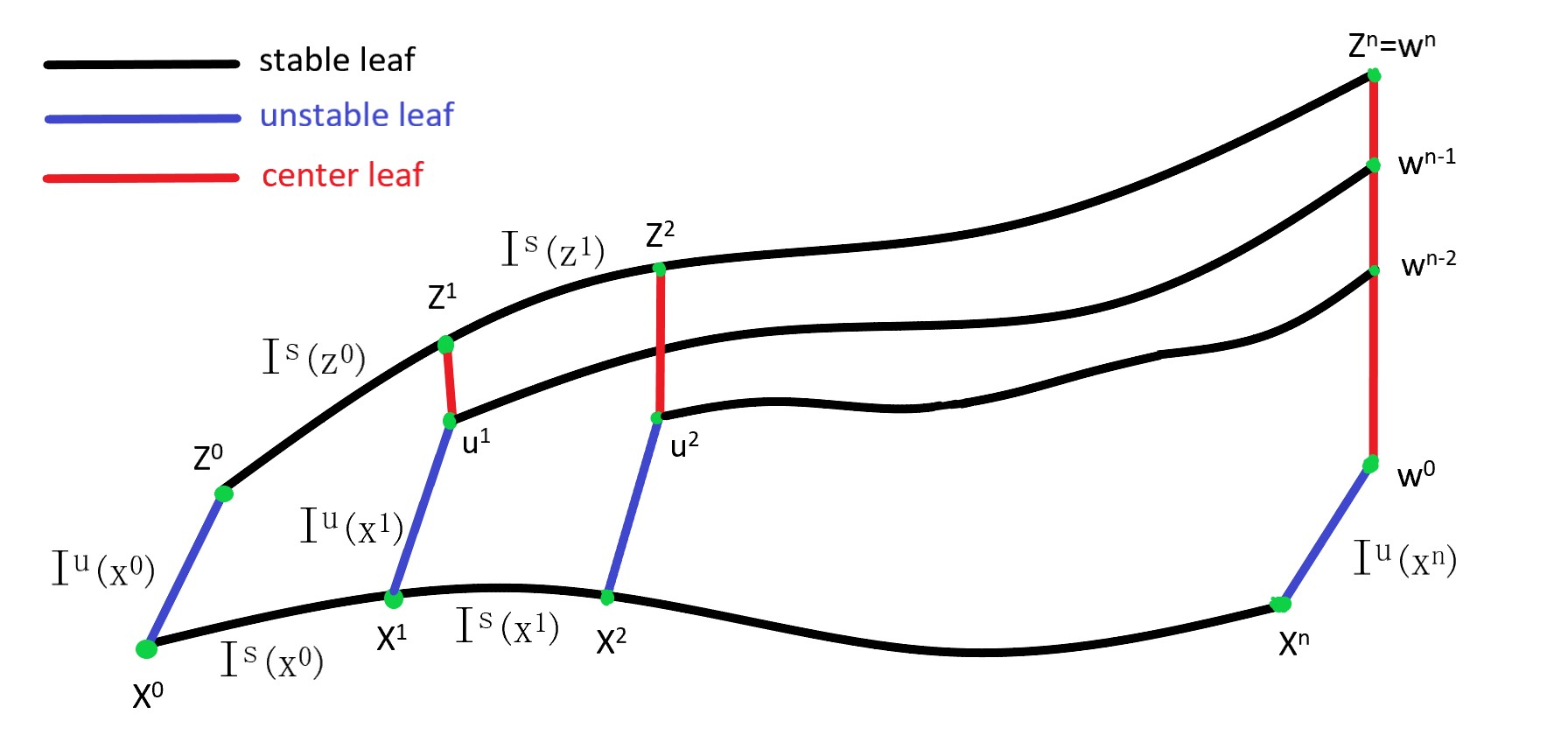}
		\caption{Global Twisting}
	\end{figure}
	
	Actually, if we denote $u^i$ to be the end-point of $I^u(x^i)$ for $i=1,\cdots,n-1$, we have
	$$
	w^i=\tilde{\cal F}^s_f(u^{n-i})\cap\tilde{\cal F}^c_f(w^0)\in\tilde{\cal F}^{cs}_f(w^0), \qquad i=1,\cdots,n-1.
	$$
	
	This implies $d^c(w^0,z^n)=n\cdot\kappa_0$. Since $\tilde{{\cal F}}^c_f$ is quasi-isometric, there exists $a>0$, such that $n\cdot a\kappa_0\leq |w^0-z^n|\rightarrow\infty$ as $n\rightarrow\infty$.
	Since $|x^n-w^0|\leq|I^u(x^n)|<l_4$, this implies
	$$
	|z^n-x^n|\longrightarrow\infty, \qquad {\rm as} \qquad n\rightarrow\infty.
	$$
	
	Let $F:\mathbb{R}^3\rightarrow\mathbb{R}^3$ be a lift of $f$, and $H:\mathbb{R}^3\rightarrow\mathbb{R}^3$ be the conjugacy satisfying $A\circ H=H\circ f$. Then there exists $L>0$, such that $|H(\tilde{x})-\tilde{x}|<L$.
	
	Since $H(\tilde{\cal F}^s_f)=\tilde{\cal F}^s_A$ and $H(\tilde{\cal F}^{cu}_f)=\tilde{\cal F}^{cu}_A$, we have
	$$
	H(z^0)\in\tilde{\cal F}^{cu}_A(x^0), \qquad
	H(x^n)\in\tilde{\cal F}^s_A(x^0), \qquad
	{\rm and} \qquad
	H(z^n)=\tilde{\cal F}^s_A(z^0)\pitchfork\tilde{\cal F}^{cu}_A(x^n).
	$$
	If we denote $\tilde{h}^s:\tilde{\cal F}^{cu}_A(x^0)\rightarrow\tilde{\cal F}^{cu}_A(x^n)$ as the holonomy map induced by the stable foliation $\tilde{\cal F}^s_A$, then we have
	$$
	H(x^n)=\tilde{h}^s(H(x^0)), \qquad {\rm and} \qquad
	H(z^n)=\tilde{h}^s(H(z^0)).
	$$
	
	However, since both $\tilde{\cal F}^s_A$ and $\tilde{\cal F}^{cu}_A$ are linear, we have
	$$
	|H(z^n)-H(x^n)|=|H(z^0)-H(x^0)|<|z^0-x^0|+2L\leq l_4+2L.
	$$
	This implies that for every $n$, we have
	$$
	|z^n-x^n|\leq|z^n-H(z^n)|+|x^n-H(x^n)|+(l_4+2L)<l_4+4L.
	$$
	This is a contradiction.	
\end{proof}

\section{Rigidity of center Lyapunov exponents}\label{sec:exponents}

In this section, we prove that if $f$ is a $C^{1+\alpha}$ conservative partially hyperbolic diffeomorphism on $\mathbb{T}^3$ which is homotopic to an Anosov automotphism and admits jointly integrable $su$-foliation ${\cal F}^{su}_f$, then the center Lyapunov exponent of every periodic orbit of $f$ is equal to $\log\lambda_c(A)$. 

From the work of Hammerlindl and Ures, the following proposition implies the ``necessary'' part of Theorem \ref{thm:main}. The idea of our proof originates from the work of A. Gogolev \cite{G}.

\begin{proposition}\label{prop:rigidity}
	Let $f$ be a $C^{1+\alpha}$ partially hyperbolic diffeomorphism which is homotopic to an Anosov automorphism $A$ on $\mathbb{T}^3$. If the stable and unstable bundles of $f$ are jointly integrable and $f$ is topologically conjugate to $A$, then
	$$
	\lambda_c(p)=\lambda_c(A), \qquad\forall p\in{\rm Per}(f).
	$$
	Thus $f$ is Anosov.	
\end{proposition}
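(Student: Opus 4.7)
The plan is to prove that every periodic point $p$ of $f$ satisfies $\lambda_c(p)=\lambda_c(A)$; once this is in hand, Lemma \ref{lem:Anosov} immediately yields that $f$ is Anosov. The strategy, following the hint from \cite{G}, is to exploit $su$-integrability together with the $C^1$-regularity of stable and unstable holonomies (Theorem \ref{thm:PSW}) to transport the rigid linear structure of $A$ back to $f$.

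First I would set up the foliated conjugacy. By the last statement of Lemma \ref{lem:semi-conjugacy}, since $f$ is $su$-integrable and topologically conjugate to $A$, the homeomorphism $h$ maps $\mathcal{F}^{su}_f$ onto the linear minimal foliation $\mathcal{F}^{su}_A$ by $2$-planes. For a periodic point $p$ of period $\pi$, the leaf $L=\mathcal{F}^{su}_f(p)$ is then $f^{\pi}$-invariant, dense in $\mathbb{T}^3$, and $h$-conjugate to the linear hyperbolic action $A^{\pi}$ on the $2$-plane $h(L)$. Next I would invoke Theorem \ref{thm:PSW}: the stable holonomy inside $\mathcal{F}^{cs}_f$ and the unstable holonomy inside $\mathcal{F}^{cu}_f$ between center leaves are uniformly $C^1$, so their compositions give $C^1$ holonomy maps $H^{su}_{x,y}:\tau_x\to\tau_y$ between center transversals at any two points $x,y$ lying on a common $su$-leaf. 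For the linear $A$ the analogous holonomies are exactly the identity in the affine center coordinate.

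The heart of the argument is a comparison on the center transversal $\tau_p$ at $p$. There the action of $f^{\pi}$ has derivative $\|Df^{\pi}|_{E^c(p)}\|$, while via $h|_{\tau_p}$ it is conjugated to multiplication by $\lambda_c(A)^{\pi}$ on $h(\tau_p)$. Using density of $L$ in $\mathbb{T}^3$, one selects a sequence $q_n\in L$ with $q_n\to p$ so that the $C^1$ $su$-holonomies $H^{su}_{p,q_n}$ relate $Df^{\pi}$ at $p$ to $Df^{\pi}$ at the nearby points of $L$. Because $A$'s $su$-holonomies on center transversals are genuinely identity maps, the topological identity $h\circ H^{su}_{p,q_n}\circ h^{-1}=\mathrm{Id}$ transfers (via chain rule along an $su$-loop in $L$ closing up in $\mathbb{T}^3$) into a cocycle equation for the derivatives of $H^{su}$. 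Taking $n\to\infty$ and using $C^{1+\alpha}$-regularity of $f$, this forces $\|Df^{\pi}|_{E^c(p)}\|=\lambda_c(A)^{\pi}$.

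The main obstacle is executing this limit rigorously, since Lemma \ref{lem:Holder} only provides Hölder regularity of $h^{-1}$ along stable and unstable leaves, so derivatives of $h$ in the center direction are unavailable. The crucial input is precisely the uniform $C^1$-regularity from Theorem \ref{thm:PSW}, which upgrades the merely topological triviality of $A$'s center holonomies into a genuine smooth constraint on $f$; this is the core of Gogolev's rigidity technique. Once $\lambda_c(p)=\lambda_c(A)$ holds at every periodic $p$, Lemma \ref{lem:Anosov} concludes that $f$ is Anosov.
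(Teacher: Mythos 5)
Your outline correctly identifies the key external ingredients — the foliated conjugacy from Lemma~\ref{lem:semi-conjugacy}, density of $su$-leaves, and the $C^1$-regularity of holonomies from Theorem~\ref{thm:PSW} — but it misses the actual mechanism of the paper's argument, and the step you acknowledge as "the main obstacle" is in fact an unresolved gap rather than a technical nuisance.

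You try to prove $\lambda_c(p)=\lambda_c(A)$ directly at a single periodic point by transporting the triviality of $A$'s center holonomy through $h$. As you yourself observe, $h$ is only Hölder in the center direction, so the relation $h\circ H^{su}_{p,q_n}\circ h^{-1}=\mathrm{Id}$ gives no derivative information at all; asserting that Theorem~\ref{thm:PSW} "upgrades the merely topological triviality" is not an argument, and no cocycle equation is actually derived. Indeed, at a single periodic point this cannot work: the $C^1$-ness of holonomies places no pointwise constraint on $\|Df^{\pi}|_{E^c(p)}\|$. The paper's proof is instead a \emph{comparison} between two periodic points. One first proves $\lambda_c(p)=\lambda_c(q)$ for all periodic $p,q$ (then $\lambda_c(p)=\lambda_c(A)$ follows separately from Lemma~\ref{lem:growth-rate} and Corollary~\ref{cor:equal}, not from the holonomy argument). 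The equality of exponents across periodic points is obtained by contradiction: if the closure of $\{\lambda_c(p)\}$ were a nontrivial interval $[\lambda_-,\lambda_+]$, one picks $p$ with $\lambda_c(p)$ near $\lambda_-$ and $q$ with $\lambda_c(q)$ near $\lambda_+$, constructs a long $su$-path from $p$ to a center arc near $q$ using density of ${\cal F}^u_A(h(p))$ together with the Hölder control from Lemma~\ref{lem:Holder}, and then iterates $f^{-1}$: a center arc $J_0$ at $p$ shrinks roughly at rate $\lambda_-^{-1}$ while its $su$-holonomy image $J_1$ near $q$ spends a definite fraction of the backward orbit near $q$ and shrinks strictly faster. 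After $N_0$ iterates, $f^{-n_0N_0}(x)$ returns within unstable distance $1$ of $p$, yet the ratio $|f^{-n_0N_0}(J_1)|/|f^{-n_0N_0}(J_0)|\to 0$ as $D\to\infty$, which makes the unstable holonomy inside ${\cal F}^{cu}_f(p)$ non-$C^1$, contradicting Theorem~\ref{thm:PSW}. This quantitative two-point contrast, with the careful bookkeeping of the proportion $N_1/N_0$ of backward time spent near $q$, is the substance of the proof and is absent from your proposal.

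Two secondary remarks. First, you write that the holonomy argument "forces $\|Df^{\pi}|_{E^c(p)}\|=\lambda_c(A)^{\pi}$"; even in the correct argument the holonomy step only gives equality of the center exponents among periodic points of $f$, and matching that common value to $\lambda_c(A)$ is a separate step via the quasi-isometry estimates of Lemma~\ref{lem:growth-rate} and Corollary~\ref{cor:equal}. Second, the Hölder estimate of Lemma~\ref{lem:Holder} is used in the paper not to differentiate $h$ but precisely to control the \emph{length} of the stable segment of the $su$-path and hence the proportion $N_1/N_0$; you dismiss it as insufficient when in fact it plays a quantitative, not a regularity, role.
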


\begin{proof}
	Recall we assumed that $\lambda_c(A)>1$. Since $f$ is topologically conjugate to $A$, the topological expanding in the center direction implies $\lambda_c(p)\geq 1$ for every periodic point $p$ of $f$.
	
	From Lemma \ref{lem:Anosov}, we only need to show that $\lambda_c(p)=\lambda_c(q)$ for any periodic points $p,q\in{\rm Per}(f)$.
	The topological conjugacy property implies that $f$ also satisfies the Shadowing Lemma. If there exist $p_1,p_2\in{\rm Per}(f)$, such that $\lambda_c(p_1)<\lambda_c(p_2)$, then the set
	$$
	\overline{\{\lambda_c(p):p\in{\rm Per}(f)\}}=[\lambda_-,\lambda_+]
	$$
	is a nontrivial interval contained in $[1,+\infty)$. By applying the Shadowing Lemma, we can take a smooth adapted Riemannian metric, such that
	$$
	\frac{\lambda_-}{1+\delta}<\|Df|_{E^c_f(x)}\|<\lambda_+\cdot(1+\delta), \qquad\forall x\in\mathbb{T}^3.
	$$
	Here $\delta$ could be arbitrarily small, and we will fix it later.
	
	Now we choose periodic points $p,q$ of $f$, such that
	$$
	\frac{\lambda_c(p)}{\lambda_-}\leq 1+\delta, \qquad {\rm and} \qquad \frac{\lambda_+}{\lambda_c(q)}\leq 1+\delta.
	$$
	Denote by $n_0$ the minimal common period of $p$ and $q$.
	
	Denote by $p'=h(p)$ and $q'=h(q)$ the conjugating periodic points of $A$. Then the strong unstable manifold $\tilde{\mathcal{F}}^u_A(p')$ is a line with irrational direction. This implies that an arc of ${\cal F}^u_A(p')$ with length $D'$ is $C_1/\sqrt{D'}$-dense in $\mathbb{T}^3$ for some $C_1>0$. From the local product structure, this implies that there exists $x'\in {\cal F}^u_A(p')$ and $y'\in{\cal F}^s_A(x')$, such that $q'\in{\cal F}^c_A(y')$. Moreover, there exists $C_2>0$ (independent of $D'$) such that
	$$
    d_{{\cal F}^u_A}(p',x')\leq D', \qquad d_{{\cal F}^s_A}(x',y')\leq\frac{C_2}{\sqrt{D'}}, \qquad {\rm and} \qquad
    d_{{\cal F}^c_A}(y',q')\leq\frac{C_2}{\sqrt{D'}}.
	$$
	
	Denote $x=h^{-1}(x')$ and $y=h^{-1}(y')$. We have $x\in{\cal F}^u_f(p)$, $y\in{\cal F}^s_f(x)$, and $q\in{\cal F}^c_f(y)$. From the continuity of the conjugacy $h$, for every $\eta>0$, there exists $D>0$, such that
	$$
	d_{{\cal F}^u_f}(p,x)\leq D, \qquad {\rm and} \qquad  d_{{\cal F}^c_f}(y,q)\leq\eta.
	$$
	By Lemma \ref{lem:Holder}, there exists $C_3>0$ and $0<\theta<1/2$, such that $d_{{\cal F}^s_f}(x,y)\leq C_3/D^{\theta}$. Recall that here constants $C_3$ and $\theta$ only depend on the contracting and expanding rates of $f$ on $E^s_f$ and $E^u_f$. Moreover, the points $x$ and $y$ also change here when $D$ changes. We will let $D$ tends to infinity in the future.
	
	Let $\eta_0>0$, such that for every $z_1,z_2\in\mathbb{T}^3$ satisfying $d(z_1,z_2)\leq3\eta_0$, we have
	$$
	\frac{\|Df|_{E^c(z_1)}\|}{(1+\delta)}<\|Df|_{E^c(z_2)}\|<(1+\delta)\cdot\|Df|_{E^c(z_1)}\|.
	$$
	From the fact that $f$ is conjugate to $A$ and from the uniform continuity of the conjugacy, there exists $0<\eta_1\leq\eta_0$, such that for any arc $J$ contained in a leaf of ${\cal F}^c_f$ with length $|J|\leq\eta_1$, it satisfies
	$$
	|f^{-n}(J)|\leq\eta_0, \qquad \forall n\geq0.
	$$
	
	Moreover, since the $su$-foliation ${\cal F}^{su}_A$ is linear, the uniform continuity of the conjugacy also shows that there exists $0<\eta_2\leq\eta_1$, such that for any arc $J$ contained in a leaf of ${\cal F}^c_f$ with length $|J|\leq\eta_2$, if $J'=h^{su}_f(J)$ is an arc contained in a leaf of ${\cal F}^c_f$ induced by the holonomy map $h^{su}$ of ${\cal F}^{su}_f$, it satisfies $|J'|\leq\eta_1$.
	
	Now we consider an arc $J_0\subset{\cal F}^c_f(p)$ with one endpoint $p$ and satisfying $|J_0|=\eta_2$, and we take $D$ large enough such that there exist $x\in{\cal F}^u_f(p)$ and $y\in{\cal F}^s_f(x)$ such that $q\in{\cal F}^c_f(y)$ and satisfy the following estimations:
	$$
	d_{{\cal F}^u_f}(p,x)\leq D, \qquad d_{{\cal F}^s_f}(x,y)\leq\frac{C_3}{D^{\theta}}\ll\eta_0, \qquad {\rm and} \qquad
	d_{{\cal F}^c_f}(y,q)\leq\eta_1.
	$$
	Let $J_1=h^{su}(J_0)$ admitting $x$ as one endpoint. This implies $|J_1|\leq\eta_1$. And we denote by $J^s(x,y)$ the arc contained in ${\cal F}^s_f(x)$ with endpoints $x$ and $y$; $J^c(y,q)$ the arc contained in ${\cal F}^c_f(y)$ with endpoints $y$ and $q$. Notice that when $D$ goes to infinity, all these estimations still hold.
	
    \begin{figure}[htbp]
		\centering
		\includegraphics[width=15cm]{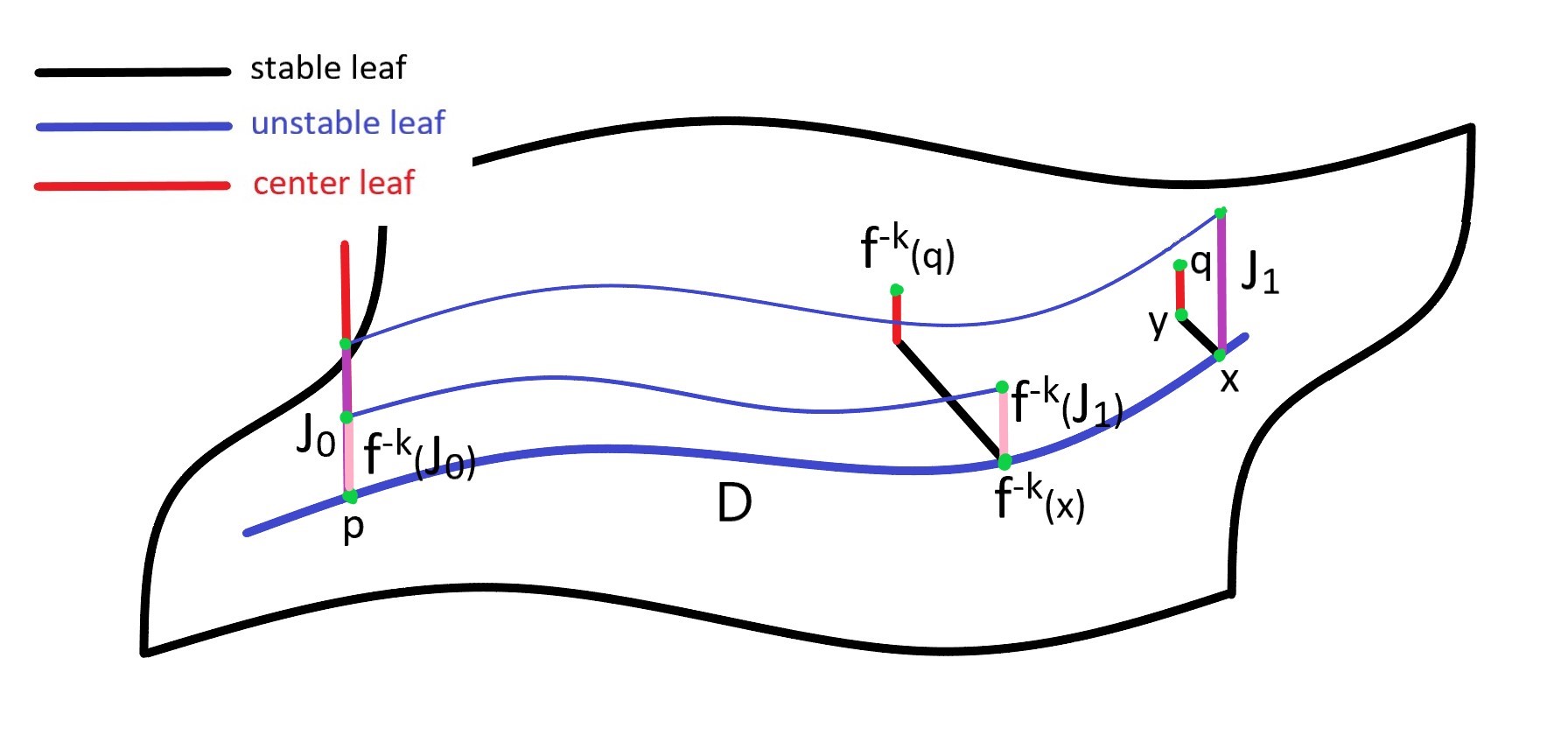}
		\caption{Holonomy Map}
	\end{figure}
	
	Denote by $N_0$ the first positive integer where $f^{-n_0N_0}(x)$ satisfies $d_{{\cal F}^u_f}(p,f^{-n_0N_0}(x))\leq 1$. Let $\mu=\sup_{x\in\mathbb{T}^3}\|Df^{-1}|_{E^u_f(x)}\|<1$, then
	$$
 N_0\leq\frac{\log D}{-n_0\log\mu}+1.
	$$
	And we have
	$$
	 |f^{-n_0N_0}(J_0)|\geq\frac{\lambda_c(p)^{-n_0N_0}}{(1+\delta)^{n_0N_0}}\cdot|J_0|\geq
	 \frac{\lambda_-^{-n_0N_0}}{(1+\delta)^{2n_0N_0}}\cdot|J_0|.
	$$
		
	On the other hand, denote $\gamma=\sup_{x\in\mathbb{T}^3}\|Df^{-1}|_{E^s_f(x)}\|>1$. We split $N_0=N_1+N_2$, such that $N_1$ is the largest integer satisfying
	$$
	|f^{-n_0N_1}(J^s(x,y))|\leq\eta_0.
	$$
	Since  $|J^s(x,y)|=d_{{\cal F}^s_f}(x,y)\leq C_3/D^{\theta}$, we have
	$$
	N_1\geq\frac{\theta\log D+\log\eta_0-\log C_3}{n_0\log\gamma}.
	$$
	Let
	$$
	\beta=\frac{1}{2}\cdot\frac{-\theta\log\mu}{\log\gamma},
	$$
	which only depends on the contracting and expanding rates of $f$ on stable and unstable bundles.
	
	Now we fix the constant $\delta$ so that it satisfies
	$$
	(1+\delta)^{[\frac{5}{\beta}]+1}\cdot\lambda_-<\lambda_+.
	$$
	Then we have
	\begin{align*}
	    \frac{N_1}{N_0}&\geq\frac{\theta\log D+\log\eta_0-\log C_3}{n_0\log\gamma}\cdot\frac{-n_0\log\mu}{\log D-n_0\log\mu} \\
	    &=\frac{-\theta\log\mu}{\log\gamma}\cdot\frac{1+\frac{\log\eta_0}{\theta\log D}-\frac{\log C_3}{\theta\log D}}{1-\frac{n_0\log\mu}{\log D}} \\
	    &=2\beta\cdot\frac{1+\frac{\log\eta_0}{\theta\log D}-\frac{\log C_3}{\theta\log D}}{1-\frac{n_0\log\mu}{\log D}}.
	\end{align*}
	So there exists $D_0>0$, such that if $D\geq D_0$, then we have
	$$
	N_1>\beta\cdot N_0.
	$$
	
	We can estimate the growth rate of $|J_1|$ now. For every $z\in J_1$, we have for every $0\leq k\leq n_0N_1$,
	$$
	d(f^{-k}(z),f^{-k}(q))\leq|f^{-k}(J_1)|+|f^{-k}(J^s(x,y))|+|f^{-k}(J^c(y,q))|\leq3\eta_0.
	$$
	This implies that
	$$
	|f^{-n_0N_1}(J_1)|\leq(1+\delta)^{n_0N_1}\lambda_c(q)^{-n_0N_1}|J_1|\leq(1+\delta)^{2n_0N_1}\lambda_+^{-n_0N_1}|J_1|.
	$$
	Since $\|Df|_{E^c_f(x)}\|>\lambda_-/(1+\delta)$ for every $x\in\mathbb{T}^3$, we have
	\begin{align*}
	  |f^{-n_0N_0}(J_1)|&\leq (1+\delta)^{n_0N_2}\lambda_-^{-n_0N_2}\cdot |f^{-n_0N_1}(J_1)|   \\
	             &<(1+\delta)^{n_0N_2}\lambda_-^{-n_0N_2}\cdot(1+\delta)^{2n_0N_1}\lambda_+^{-n_0N_1}|J_1|  \\
	             &<(1+\delta)^{2n_0N_0}\lambda_-^{-n_0N_2}\lambda_+^{-n_0N_1}|J_1|.
	\end{align*}
	
	Thus we have
	\begin{align*}
	\frac{|f^{-n_0N_0}(J_1)|}{|f^{-n_0N_0}(J_0)|}&< \frac{(1+\delta)^{2n_0N_0}\lambda_-^{-n_0N_2}\lambda_+^{-n_0N_1}}{\lambda_-^{-n_0N_0}\cdot(1+\delta)^{-2n_0N_0}}\cdot\frac{|J_1|}{|J_0|} \\
	&\leq (1+\delta)^{4n_0N_0}\cdot\left(\frac{\lambda_-}{\lambda_+}\right)^{\beta n_0N_0}\cdot\frac{|J_1|}{|J_0|} \\
	&<(1+\delta)^{-n_0N_0}\cdot\frac{\eta_1}{\eta_2}.
	\end{align*}
	
	When $D$ tends to infinity,  $N_0$ tends to infinity and $|f^{-n_0N_0}(J_1)|/|f^{-n_0N_0}(J_0)|$ tends to zero. Since $d_{{\cal F}^u_f}(p,f^{-n_0N_0}(x))\leq 1$, this implies that the holonomy map of unstable foliations restricted in ${\cal F}^{cu}_f(p)$ is not $C^1$-smooth. This contradicts to Theorem \ref{thm:PSW}, which states that these holonomy maps are locally uniformly $C^1$-smooth.
\end{proof}

\section{Equivalent conditions for $su$-integrability}\label{sec:equivalent}

From the proof of Proposition \ref{prop:integrability} and Proposition \ref{prop:rigidity},  we can see that if $f:\mathbb{T}^3\rightarrow\mathbb{T}^3$ is partially hyperbolic and Anosov, then $f$ is $su$-integrable as a partially hyperbolic diffeomorphism if and only if $\lambda_c(p)=\lambda_c(A)$ for every $p\in{\rm Per}(f)$.
Combined with the Main Theorem of \cite{RGZ}, we have a series of equivalent conditions to $su$-integrability of $f$.

\begin{theorem}\label{thm:equivalent}
	Let $f$ be a $C^{1+\alpha}$ partially hyperbolic and Anosov diffeomorphism, which is topologically conjugate to an Anosov automorphism $A$,  on $\mathbb{T}^3$. The following conditions are equivalent:
	\begin{enumerate}
		\item $f$ is $su$-integrable;
		\item $f$ is not accessible;
		\item The topological conjugacy $h$ ($h\circ f=A\circ h$) preserves unstable foliation of $f$: $h({\cal F}^u_f)={\cal F}^u_A$;
		\item The lifting unstable foliation $\tilde{\cal F}^u_f$ is homology bounded in $\mathbb{R}^3$, i.e. $\tilde{\cal F}^u_f(x)$ is uniformally bounded with $\tilde{\cal F}^u_A(x)$ for every $x\in\mathbb{R}^3$;
		\item $\lambda_c(p)=\lambda_c(A)$ for every periodic point $p\in{\rm Per}(f)$;
		\item The topological conjugacy $h$ is differentiable along ${\cal F}^c_f$.
	\end{enumerate}
\end{theorem}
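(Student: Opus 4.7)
The plan is to deduce Theorem \ref{thm:equivalent} by combining the two propositions proved earlier in this paper with the Main Theorem of \cite{RGZ} and the standard accessibility/$su$-integrability dichotomy on $\mathbb{T}^3$. The only genuinely new input is the equivalence between the periodic-data condition (5) and $su$-integrability (1); the remaining equivalences are already in the literature.

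First I would establish (1) $\Leftrightarrow$ (5). The direction (5) $\Rightarrow$ (1) is exactly Proposition \ref{prop:integrability}, which does not need $f$ to be Anosov at the outset. Conversely, the standing hypothesis here is that $f$ is topologically conjugate to $A$, which is precisely the setting of Proposition \ref{prop:rigidity}; that proposition yields $\lambda_c(p)=\lambda_c(A)$ for every $p\in{\rm Per}(f)$ whenever the stable and unstable bundles of $f$ are jointly integrable. This pins down (1) $\Leftrightarrow$ (5).

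Next I would import the remaining equivalences from prior work. The Main Theorem of \cite{RGZ} applies to any $C^{1+\alpha}$ partially hyperbolic Anosov diffeomorphism on $\mathbb{T}^3$ topologically conjugate to a linear Anosov, and already furnishes the chain (1) $\Leftrightarrow$ (3) $\Leftrightarrow$ (4) $\Leftrightarrow$ (6). For (1) $\Leftrightarrow$ (2), one uses the general fact, established in \cite{HU} and recalled in Section \ref{sec:introduction}, that on $\mathbb{T}^3$ a partially hyperbolic diffeomorphism homotopic to an Anosov automorphism is non-accessible if and only if its stable and unstable bundles are jointly integrable.

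The remainder of the proof is bookkeeping. I do not anticipate any serious obstacle, since every non-trivial implication is supplied either by Proposition \ref{prop:integrability}, Proposition \ref{prop:rigidity}, \cite{RGZ}, or \cite{HU}. The only care needed is to verify that the hypotheses of each cited result are simultaneously met here — that $f$ is partially hyperbolic, Anosov, and conjugate to a linear Anosov $A$ — but this is built into the statement of the theorem, so stitching the four sources together closes the cycle of equivalences.
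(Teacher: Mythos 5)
You correctly assemble $(1)\Leftrightarrow(5)$ from Propositions \ref{prop:integrability} and \ref{prop:rigidity}, and it is fine to fold $(1)\Leftrightarrow(2)$ into the general $su$-integrability/accessibility dichotomy on $\mathbb{T}^3$. But there is a substantive gap concerning item $(6)$: you assert that the Main Theorem of \cite{RGZ} supplies the chain $(1)\Leftrightarrow(3)\Leftrightarrow(4)\Leftrightarrow(6)$. It does not. As the paper states, \cite{RGZ} covers only items $(1)$ through $(4)$; the equivalence involving $(6)$ --- that the conjugacy $h$ is differentiable along ${\cal F}^c_f$ --- is new, and it is exactly the part of Theorem \ref{thm:equivalent} that still requires an original argument once the earlier propositions are in place. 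By misattributing $(6)$ to \cite{RGZ}, your plan silently drops the bulk of the actual proof.

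Concretely, the paper proves $(5)\Rightarrow(6)$ by invoking the Livshits-type center metric $d^c$ of Lemma \ref{lem:center-metric} (normalized using Remark \ref{rk:center-metric} so that $d^c(p,x)=d_{{\cal F}^c_A}(p',x')$), showing via an iterated midpoint argument that $h$ restricted to a center arc is an isometry between $d^c$ and the linear metric, extending by density, and then reading off that $\|Dh|_{E^c_f}\|=e^{\phi}$. It proves $(6)\Rightarrow(5)$ by a distortion-control estimate along $f^{-k\pi(p)}(J)$, comparing the contraction rate $\lambda_c(p)^{-k\pi(p)}$ with the linear rate $\lambda_c(A)^{-k\pi(p)}$ and squeezing them together using the two-sided bound $1/C\leq|h(I)|/|I|\leq C$ guaranteed by differentiability. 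None of this is a corollary of earlier cited results, so your proposal needs to either find $(6)$ actually proved in the literature under these exact hypotheses (it is not in \cite{RGZ}) or supply an argument for $(5)\Leftrightarrow(6)$ along the above lines.
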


\begin{proof}
	The equivalence from Item 1 to Item 4 has been proved in \cite[Main Theorem]{RGZ}. The equivalence between Item 1 to Item 5 has been proved Proposition \ref{prop:integrability} and Proposition \ref{prop:rigidity}. We only need to prove the equivalence between Item 5 to Item 6.
	
	\vskip 0.3 \baselineskip
	
	\noindent{\bf Item 5 $\Longrightarrow$ Item 6:} Let $p$ be a fixed point of $f$. The point $p'=h(p)$ is a fixed point of $A$. Now we choose a point $x\in{\cal F}^c_f(p)$, and denote by $J\subset{\cal F}^c_f(p)$ the center arc admitting $p,x$ as two endpoints. Then the points $p',x'=h(x)$ are endpoints of $J'=h(J)\subset{\cal F}^c_A(p')$.
	
	From Lemma \ref{lem:center-metric} and Remark \ref{rk:center-metric}, there exists a continuous metric $d^c(\cdot,\cdot)$ defined on every leaf of ${\cal F}^c_f$, satisfying all three properties in Lemma \ref{lem:center-metric} and
	$$
	d^c(p,x)=|J'|=d_{{\cal F}^c_A}(p',x').
	$$ 
	Here $|J'|$ is the length of arc $J'$. 
	
	\begin{claim}
		The conjugacy $h|_{J}:J\rightarrow J'$ is an isometry between $d^c(\cdot,\cdot)$ on $J$ and $d_{{\cal F}^c_A}(\cdot,\cdot)$ on $|\cdot|$ on $J'$.
	\end{claim}
	
	\begin{proof}[Proof of the claim]
		Denote by $x_{\frac12}\in J$ be the middle point between $p$ and $x$ under $d^c(\cdot,\cdot)$, i.e.
		$$
		d^c(p,x_{\frac12})=d^c(x_{\frac12},x).
		$$
		We want to show that $d_{{\cal F}^c_A}(p',h(x_{\frac12}))=d_{{\cal F}^c_A}(h(x_{\frac12}),x')$. 
		
		Since ${\cal F}^s_f(p)$ is dense in $\mathbb{T}^3$, there exists $y_n\in{\cal F}^s_f(p)$ such that $y_n\rightarrow x_{\frac12}$ as $n\rightarrow\infty$. Now we consider the holonomy map
		$$
		h^s_{p,y_n}:{\cal F}^c_f(p)\rightarrow{\cal F}^c_f(y_n).
		$$
		Since $h^s_{p,y_n}$ is an isometry under the metric $d^c(\cdot,\cdot)$ and $d^c(p,x_{\frac12})=d^c(x_{\frac12},x)$, we have
		$$
		h^s_{p,y_n}(x_{\frac12})\rightarrow x, \qquad{\rm as}~
		n\rightarrow\infty.
		$$
		
		On the other hand, $h({\cal F}^s_f(p))={\cal F}^s_A(p')$ implies $h(y_n)\in{\cal F}^s_A(p')$ and $h(y_n)\rightarrow h(x_{\frac12})$ as $n\rightarrow\infty$. Moreover,  we have
		$$
		h\circ h^s_{p,y_n}(x_{\frac12})\rightarrow x', \qquad{\rm as}~
		n\rightarrow\infty.
		$$
		This implies $d_{{\cal F}^c_A}(p',h(x_{\frac12}))=d_{{\cal F}^c_A}(h(x_{\frac12}),x')$.
		
		Repeating this procedure:
		\begin{itemize}
			\item denote by $x_{\frac14}$ the middle point between $p$ and $x_{\frac12}$ under $d^c(\cdot,\cdot)$, then we have
			$d_{{\cal F}^c_A}(p',h(x_{\frac14}))=d_{{\cal F}^c_A}(h(x_{\frac14}),h(x_{\frac12}))$;
			\item denote by $x_{\frac34}$ the middle point between $x_{\frac12}$ and $x'$ under $d^c(\cdot,\cdot)$, then we have
			$d_{{\cal F}^c_A}(h(x_{\frac12}),h(x_{\frac34}))=d_{{\cal F}^c_A}(h(x_{\frac34}),x')$.
		\end{itemize}
		
		Again, we take the middle points between $p$ and $x_{\frac14}$, $x_{\frac14}$ and $x_{\frac12}$, $x_{\frac12}$ and $x_{\frac34}$, $x_{\frac34}$ and $x'$, respectively. The same argument shows that $h$ preserves all the middle points between these intervals and their images by $h$. Repeating this procedure, form the density of these middle points, $h|_J:J\rightarrow J'$ is an isometry between $d^c(\cdot,\cdot)$ on $J$ and $d_{{\cal F}^c_A}(\cdot,\cdot)$ on $J'$. 	
	\end{proof}
	
	Recall that $d^c(f(x_1),f(x_2))=\lambda_c(A)\cdot d^c(x_1,x_2)$ for every $x_1,x_2\in J$ and $\|DA|_{E^c_A}\|\equiv\lambda_c(A)$. Since $h$
	is an isometry between $d^c(\cdot,\cdot)$ on $J$ and the natural distance on $J'$, it is an isometry between $d^c(\cdot,\cdot)$ on  ${\cal F}^c_f(p)$ and the natural distance on ${\cal F}^c_A(p')$.
	From the density of ${\cal F}^c_f(p)$ in $\mathbb{T}^3$, this showes that $h$ is an isometry between $d^c(\cdot,\cdot)$ on every leaf of ${\cal F}^c_f$ and the natural distance on every leaf of ${\cal F}^c_A$.
	
	Finally, for every $z\in{\cal F}^c_f(y)$ and $y\in\mathbb{T}^3$, let $\gamma:[0,1]\rightarrow{\cal F}^c_f(y)$ be a $C^1$-curve connecting $y$ and $z$, then
	$$
	d^c(y,z):=\int_{0}^{1}\exp(\phi\circ\gamma(t))\cdot|\gamma'(t)|{\rm d}t.
	$$
	Let $z\rightarrow y$, it implies 
	$$
	\|Dh|_{E^c_f(y)}\|=e^{\phi(y)}, \qquad\forall y\in\mathbb{T}^3,
	$$
	which proves that $h$ is differentiable in the center direction. 
	
	\vskip 0.3 \baselineskip
	
	\noindent{\bf Item 6 $\Longrightarrow$ Item 5:} Let $p\in{\rm Per}(f)$ be a periodic point of $f$ with period $\pi(p)$. Since $h$ is differentiable along ${\cal F}^c_f$, there exists a small arc $J\subset{\cal F}^c_f(p)$ containing $p$ and a constant $C>1$, such that for any subarc $I\subseteq J$, it satisfies
	$$
	\frac{1}{C}\leq\frac{|h(I)|}{|I|}\leq C.
	$$
	
	From the conjugacy, we have
	$$
	h\circ f^{-k\cdot\pi(p)}(J)=A^{-k\cdot\pi(p)}\circ h(J)\subseteq h(J), \qquad\forall k\geq0.
	$$
	Since $f$ is $C^{1+\alpha}$-smooth and both $f^{-1}$ is uniformly contracting in the center direction, the distortion control techniques showes that there exists another constant $K>1$, such that
	$$
	\frac{1}{K}\cdot\lambda_c(p)^{-k\cdot\pi(p)}<\frac{|f^{-k\cdot\pi(p)}(J)|}{|J|}<K\cdot\lambda_c(p)^{-k\cdot\pi(p)}, \qquad\forall k\geq0.
	$$
	On the other hand, we have $|A^{-k\cdot\pi(p)}(h(J))|=\lambda_c(A)^{-k\cdot\pi(p)}\cdot|h(J)|$ for every $k\geq0$.
	
	This showes that
	$$
	\frac{1}{K}\cdot\frac{\lambda_c(A)^{-k\cdot\pi(p)}\cdot|h(J)|}{\lambda_c(p)^{-k\cdot\pi(p)}\cdot|J|}<
	\frac{|A^{-k\cdot\pi(p)}(h(J))|}{|f^{-k\cdot\pi(p)}(J)|} <K\cdot\frac{\lambda_c(A)^{-k\cdot\pi(p)}\cdot|h(J)|}{\lambda_c(p)^{-k\cdot\pi(p)}\cdot|J|},
	\qquad\forall k\geq0.
	$$
	Since $1/C\leq|h(I)|/|I|\leq C$ for every $I\subseteq J$, we have
	$$
	\frac{1}{K\cdot C^2}<\frac{\lambda_c(A)^{-k\cdot\pi(p)}}{\lambda_c(p)^{-k\cdot\pi(p)}}<K\cdot C^2, \qquad\forall k\geq0.
	$$
	This proves $\lambda_c(p)=\lambda_c(A)$.
\end{proof}

\begin{remark}
	It should notice that we can build an $f$ such that its topological conjugacy is differentiable only in the center direction. Let $p\in\mathbb{T}^3$ be a fixed point of $A$. We composed a rotation around $p$ in the stable and unstable plane. For the new diffeomorphism, the stable and unstable Lyapunov exponents of $p$ are different from $A$. The topological conjugacy is differentiable in the center foliation.
	
	However, when $f$ is $C^1$-close to $A$, it has been showed by Gogolev and Guysinsky \cite{GG,G1} that the topological conjugacy is smooth if and only if all periodic points of $f$ admit the same three Lyapunov exponents as $A$. Thus the topological conjugacy is not differentiable.
\end{remark}

\noindent Shaobo Gan, School of Mathematical Sciences, Peking University, Beijing 100871, China\\
E-mail address: gansb@pku.edu.cn
\vspace{0.5cm}

\noindent Yi Shi, School of Mathematical Sciences, Peking University, Beijing 100871, China\\
E-mail address: shiyi@math.pku.edu.cn

\end{document}